\newcommand{\mt}{\mu_t}
\newcommand{\mx}{\mu_x}
\renewcommand{\d}{\mathrm{d}}
\newcommand{\ddt}{{\Delta_t}}
\newcommand{\prn}[1]{\left( #1 \right)}                 
\newcommand{\bra}[1]{\left[ #1 \right]}
\newcommand{\brc}[1]{\left\lbrace #1 \right\rbrace}
\newcommand{\Int}{\mathbb{Z}}
\newcommand{\Nat}{\mathbb{N}}
\newcommand{\Natn}{\mathbb{N}_0}
\newcommand{\Real}{\mathbb{R}}
\newcommand{\TS}{\mathbb{T}}
\newcommand{\To}{\rightarrow}
\newtheorem{thm}{thm}
\newtheorem{lem}[thm]{lem}
\newtheorem{cor}[thm]{Corollary}
\theoremstyle{def}
\newtheorem{rem}[thm]{Remark}
\newtheorem{exmp}[thm]{exmp}
\begin{document}


\title{Transport Equation on Semidiscrete Domains and Poisson-Bernoulli Processes}\footnote{This is a preprint. The final version of this paper will appear in Journal of Difference Equations and Applications}
\author{Petr Stehl\'{\i}k, Jon\'{a}\v{s} Volek}

\maketitle
\begin{center}
\footnotesize
\begin{minipage}[c]{11cm}
\subsection*{Abstract}In this paper we consider a scalar transport equation with constant coefficients on domains with discrete space and continuous, discrete or general time. We show that on all these underlying domains solutions of the transport equation can conserve sign and integrals both in time and space. Detailed analysis reveals that, under some initial conditions, the solutions correspond to counting stochastic processes and related probability distributions. Consequently, the transport equation could generate various modifications of these processes and distributions and provide some insights into corresponding convergence questions. Possible applications are suggested and discussed.
\\

\subsection*{Keywords} Transport Equation, Advection Equation, Conservation Law, Time Scales, Semidiscrete Method, Poisson process, Bernoulli process.
\\
\subsection*{AMS Subject classification} 34N05, 35F10, 39A14, 65M06.
\end{minipage}
\end{center}

\section{Introduction}
Scalar transport equation with constant coefficients $u_t+k u_x=0$ belongs among the simplest partial differential equations. Its importance is based on the following facts. Firstly, it describes advective transport of fluids, as well as one-way wave propagation. Secondly, it serves as a base for a study of hyperbolic partial differential equations (and is consequently analysed also in numerical analysis). Thirdly, its nonlinear modifications model complex transport of fluids, heat or mass. Finally, its study is closely connected to conservation laws (see \cite{bEva} or \cite{bLev}).

Properties  and solutions of partial difference equations have been studied mainly from numerical (e.g. \cite{bLev}) but also from analytical point of view (e.g. \cite{bChe}). Meanwhile, in one dimension, there has been a wide interest in the problems with mixed timing, which has been recently clustered around the time scales calculus and the so-called dynamic equations (see \cite{bBP}, \cite{aHil}). Nevertheless, there is only limited literature on partial equations on time scales (see \cite{aAM}, \cite{aBG}, \cite{a_maxpripde}). These papers indicate the complexity of such settings and the necessity to analyze basic problems like transport equation. Our analysis is also closely related to numerical semidiscrete methods (e.g. \cite[Section 10.4]{bLev}) or analytical Rothe method (e.g. \cite{aRot}).

In this paper we consider a transport equation on domains with discrete space and general (continuous, discrete and time scale) time (see Figure \ref{f:domains}). We show that the solutions of transport equation does not propagate along characteristics lines as in the classical case and feature behavior close to the classical diffusion equation. Our analysis of sign and integral conservation discloses interesting relationship between the solutions on such domains and probability distributions related to Poisson and Bernoulli stochastic processes. These counting processes are used to model waiting times for occurence of certain events (defects, phone calls, customers' arrivals, etc.), see \cite{bBer}, \cite{bGha} or \cite{bPan} for more details. Consequently, considering domains with general time, we are able not only to generalize these standard processes but also generate transitional processes of Poisson-Bernoulli type and corresponding distributions. Moreover, our analysis provides a different perspective on some numerical questions (numerical diffusion) and relate it to analytical problems (relationship between the CFL condition and regressivity). Finally, it also establishes relationship between the time scales calculus and heterogeneous and mixed probability distributions in the probability theory.

In Section \ref{s:classical} we summarize well-known features of the classical transport equation. In Section \ref{s:con-time} we consider a transport equation with discrete space and continuous time. In Section \ref{s:dis-time} we solve the problem on domains with discrete time. In Section \ref{s:ts-time} we generalize those results to domains with a general time and prove the necessary and sufficient conditions which ensure that the sign and both time and space integrals are conserved (Theorem \ref{t:ts-pdf}). Finally, in Section \ref{s:processes} we discuss convergence issues, applications to probability distributions and stochastic processes and provide two examples.

\begin{figure}[t]
\vspace{42pt}
\begin{center}
\includegraphics[height=1.7in,width=1.7in]{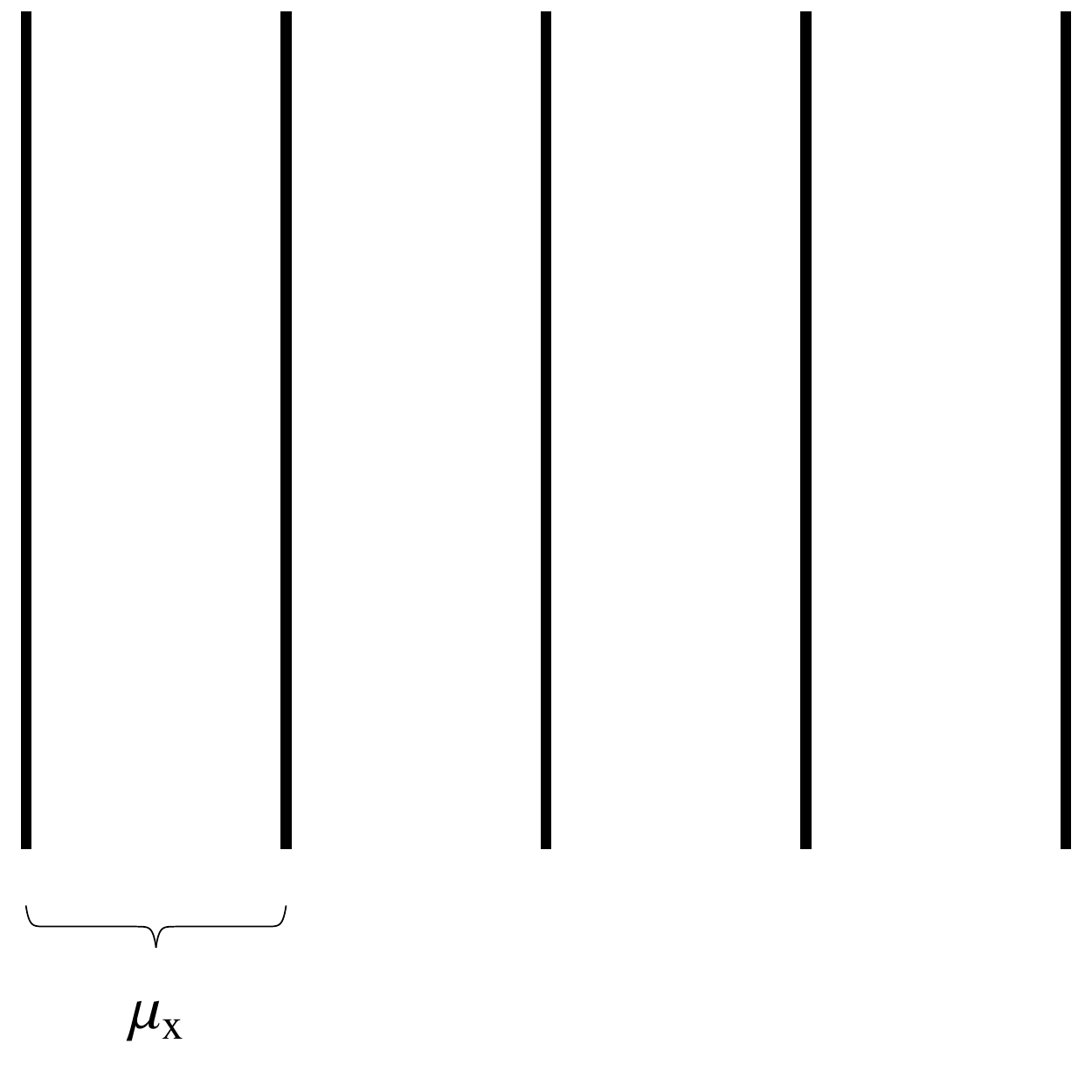} \hspace{0.2in} \ 
\includegraphics[height=1.7in,width=1.7in]{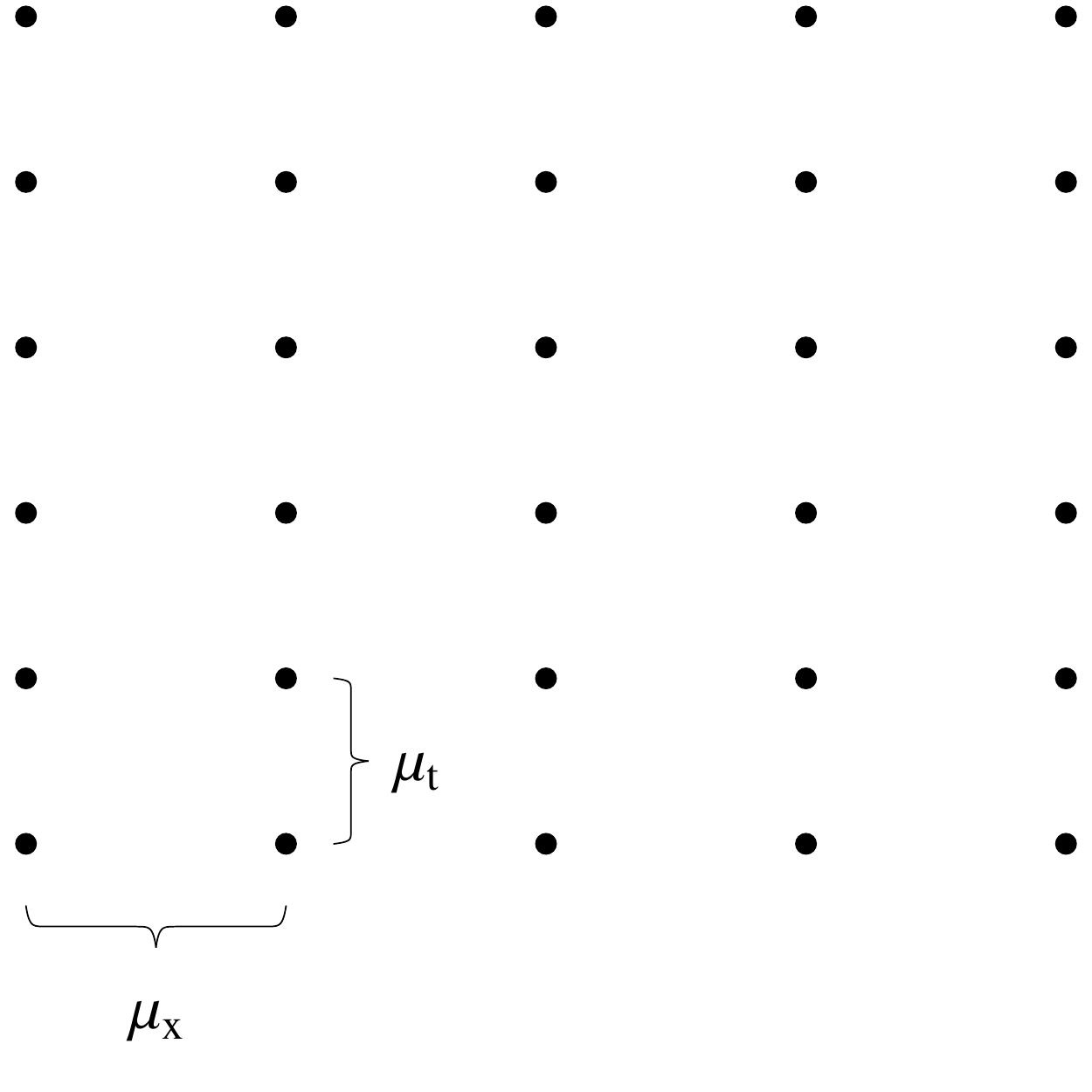} \hspace{0.2in} \
\includegraphics[height=1.7in,width=1.7in]{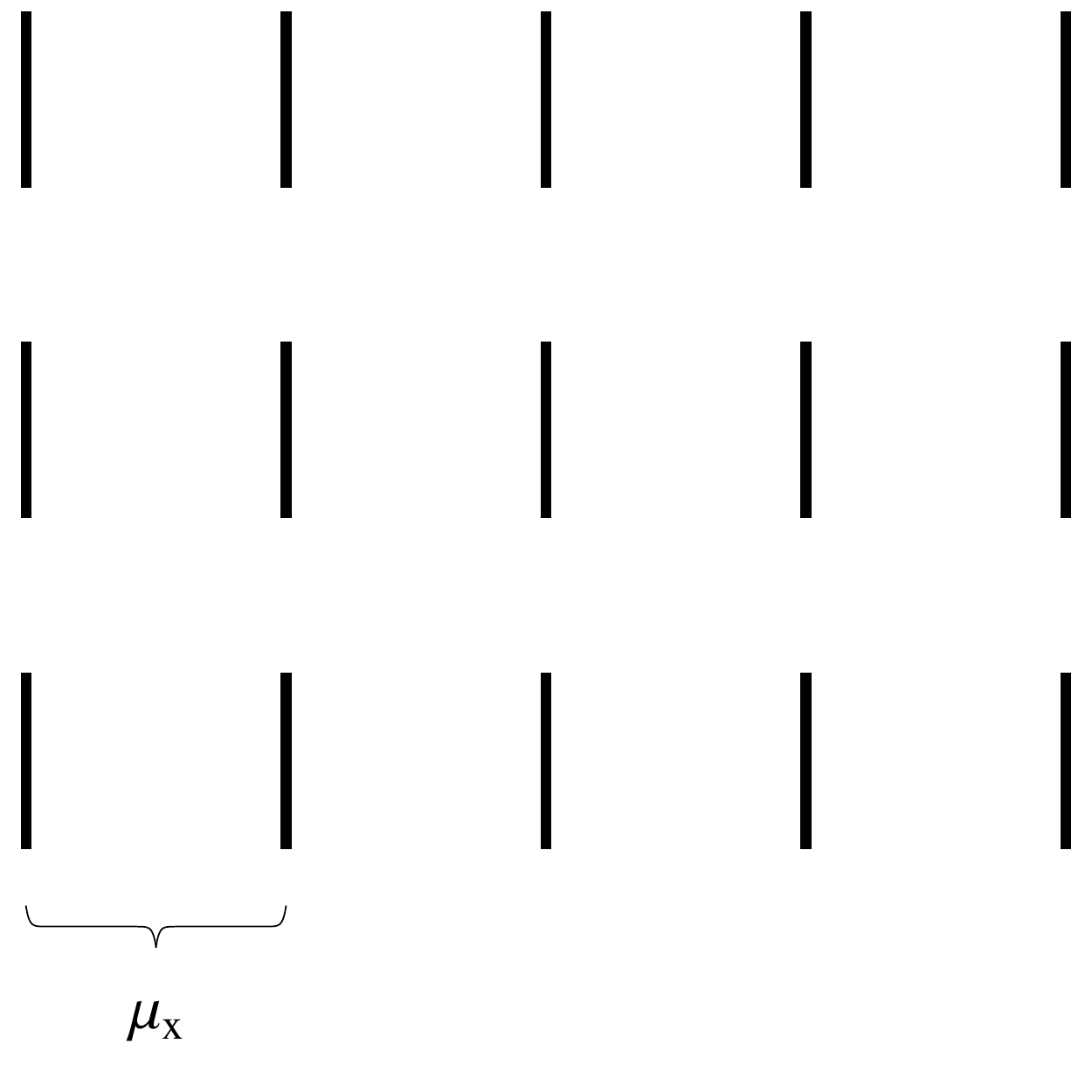}
\end{center}

\caption{Examples of various domains considered in this paper. We study domains with discrete space and continuous (Section \ref{s:con-time}), discrete (Section \ref{s:dis-time}) and general time (Section \ref{s:ts-time}).}
\label{f:domains}
\end{figure}


\section{Preliminaries and Notation}\label{s:prel}
The sets $\Real$, $\Int$, $\Nat$ denote real, integer and natural numbers. Furthermore, let us introduce $\Natn=\Nat\cup\brc{0}$ and $\Real_0^+=[0,\infty)$. Finally, we use multiples of discrete number sets, e.g. $a$-multiple of integers is denoted by $a\Int$ and defined by $a\Int=\brc{\ldots,-2a,-a,0,a,2a,\ldots}$.

Partial derivatives are denoted by $u_t(x,t)$ and $u_x(x,t)$ and partial differences by
\begin{equation}\label{e:nabla}
\Delta_t u(x,t) = \frac{u(x,t+\mt)-u(x,t)}{\mt}\quad \mathrm{and} \quad \nabla_x u(x,t) =\frac{u(x,t)-u(x-\mx,t)}{\mx},
\end{equation}
where $\mt$ and $\mx$ denote step sizes in time and space.

In Section \ref{s:ts-time}, we consider time to be a general time scale $\TS$, i.e. an arbitrary closed subset of $\Real$. Time step could be variable, described by a graininess function $\mt:\TS\To\Real_0^+$. We use the partial delta derivative $u^\ddt$ which reduces to $u_t$ in points in which $\mt(t)=0$ or to $\Delta_t u$ in those $t$ in which $\mu(t)>0$. Similarly, we work with the so-called delta-integral which corresponds to standard integration if $\TS=\Real$ or to summation if $\TS=\Int$. Finally, the dynamic exponential function $e_p(x,x_0)$ is defined as a solution of the initial value problem (under the regressivity condition $1+p(t)\mu(t) \neq 0$)
\[
	\begin{cases}
		x^\Delta(t)=p(t) x(t),\\
		x(0)=1.
	\end{cases}
\] 
For more details concerning the time scale calculus we refer the interested reader to the survey monograph \cite{bBP}.

Given function $u(x,t)$, by $u(x,\cdot)$ we mean functions of one variable having the form $u(0,t)$, $u(1,t)$, etc. Similarly, by $u(\cdot,t)$ we understand one-dimensional sections of $u(x,t)$ having the form $u(x,0)$, $u(x,1)$, etc.

\section{Classical Transport Equation}\label{s:classical}
Let us briefly summarize essential properties of the classical transport equation
\begin{equation}\label{e:con-space}
\begin{cases}
u_t(x,t)+k u_x(x,t) = 0, \quad t\in\Real_0^+, x\in\Real,\\
u(x,0)=\phi(x),  \quad  x\in\Real.
\end{cases}
\end{equation}
with $\phi\in C^1$. Typical features whose counterparts are studied in this paper include:

\begin{itemize}
	\item the unique solution $u(x,t)=\phi(x-kt)$ could be obtained via the \emph{method of characteristics}, the solution is constant on the \emph{characteristic lines} where $x-kt=C$,
	\item consequently, the solution \emph{conserves sign}, i.e. if $\phi(x)\geq 0$ then $u(x,t)\geq 0$,
	\item moreover, the solution \emph{conserves integral} in space sections, i.e. if $\int\limits_{-\infty}^\infty \phi(x) \d x = K$, then
	\[
		\int\limits_{-\infty}^\infty u(x,t) \d x = K,\quad \mathrm{for\ all\ } t\geq 0,
	\]
	\item finally, the solution \emph{conserves integral} in time sections in the following sense. For $k>0$ we have that
	\[
		\int\limits_{0}^\infty u(x,t) \d t = \frac{1}{k} \int\limits_{-\infty}^x \phi(s) \d s.
	\]
	Consequently, if $\phi(x)=0$ for $x\geq x_0$, then the integral along time sections is constant for all $x\geq x_0$.
\end{itemize}

\section{Discrete Space and Continuous Time}\label{s:con-time}
In contrast to the classical problem \eqref{e:con-space} we consider a domain with discrete space and the problem
\begin{equation}\label{e:con-time}
\begin{cases}
u_t(x,t)+k\nabla_x u(x,t) = 0, \quad t\in\Real_0^+, x\in\Int,\\
u(x,0)=\begin{cases}
	A, \quad x=0,\\
	0, \quad x\neq 0,
	\end{cases}
\end{cases}
\end{equation}
where $A>0$, $k>0$ and $\nabla_x u$ reduces to\footnote{We assume that $k>0$ so that the solution is bounded and does not vanish. Moreover, we use the nabla difference instead of delta difference. The single reason is the simpler form of the solution \eqref{e:con:solution}. If we used the delta difference, we would consider $k<0$ and the solution would propagate to the quadrant with $t>0$ and $x<0$. This applies also to the problems which we study in the following sections.}
\[
\nabla_x u(x,t) = u(x,t)-u(x-1,t).
\]
One could rewrite the equation in \eqref{e:con-time} into
\[
u_t(x,t)=-k u(x,t) + k u(x-1,t),
\]
which implies that the problem  \eqref{e:con-time} could be viewed as an infinite system of differential equations.

\begin{lem}\label{l:con-solution}
The unique solution of the problem \eqref{e:con-time} has the form:
\begin{equation} \label{e:con:solution}
		\displaystyle u(x, t) = \left\lbrace
		\begin{array}{l l}
			\displaystyle A \frac{k^x}{x!}t^x e^{-kt}, & \quad t\in\Real_0^+, x\in\Natn,\\
			&\\
			0, & \quad t\in\Real_0^+, x\in\Int, x<0.
		\end{array}	
		\right.
	\end{equation}
\end{lem}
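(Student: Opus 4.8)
The plan is to exploit the lower-triangular coupling of the system written in the form $u_t(x,t) = -k u(x,t) + k u(x-1,t)$ given in the excerpt: since $\nabla_x$ reaches back only to $x-1$, the equation for the section $u(x,\cdot)$ couples to $u(x-1,\cdot)$ but to nothing ahead of it. This means the infinite system can be resolved one space-section at a time, by induction on $x$, each step being a scalar first-order linear initial value problem. First I would settle the region $x<0$, then run the induction forward on $x\in\Natn$, and finally read off uniqueness from the same triangular structure.

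I expect the only genuine friction to lie in the region $x<0$, where all initial data vanish and one must show $u\equiv 0$. The subsystem $\brc{u(x,\cdot):x<0}$ is closed, as it never refers to any $u(y,\cdot)$ with $y\geq 0$. Passing to $w(x,t)=e^{kt}u(x,t)$ turns the equation into $w_t(x,t)=k\,w(x-1,t)$, whence $w(x,t)=k\int_0^t w(x-1,s)\,\d s$ because $w(x,0)=0$. Iterating this identity $n$ times and invoking a uniform bound $\av{w}\leq M$ on the relevant range gives $\av{w(x,t)}\leq (kt)^n M/n!$, which tends to $0$ as $n\to\infty$; hence $u(x,t)\equiv 0$ for $x<0$. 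The delicate point here is precisely the boundedness hypothesis: since the system is infinite there is no smallest index to anchor an induction, so one must justify working in a function class where such an a~priori estimate is legitimate.

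For $x\in\Natn$ I would argue by induction. The base case $x=0$ uses $u(-1,\cdot)\equiv0$ from the previous step, reducing the equation to the scalar ODE $u_t(0,t)=-k u(0,t)$ with $u(0,0)=A$, whose solution $u(0,t)=A e^{-kt}$ matches \eqref{e:con:solution}. For the inductive step, assuming $u(x-1,t)=A\,k^{x-1}t^{x-1}e^{-kt}/(x-1)!$, the section $u(x,\cdot)$ solves the first-order linear ODE $u_t(x,t)+k u(x,t)=k\,u(x-1,t)$ with $u(x,0)=0$. Multiplying by the integrating factor $e^{kt}$ collapses the left-hand side to $\prn{e^{kt}u(x,t)}_t$ and the right-hand side to $A\,k^x t^{x-1}/(x-1)!$; integrating from $0$ to $t$ and using $u(x,0)=0$ yields $e^{kt}u(x,t)=A\,k^x t^x/x!$, which is exactly \eqref{e:con:solution}.

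Since each step solves a scalar linear initial value problem with a unique solution, this sequential construction both produces a solution and determines it completely, so uniqueness within the class fixed in the second paragraph is immediate. It then remains only to confirm that the resulting formula genuinely satisfies \eqref{e:con-time}: differentiability in $t$ is clear, the initial condition holds by construction, and the identity $u_t+k\nabla_x u=0$ is the routine verification already encoded in the inductive computation (the recursion $u_t(x,\cdot)+ku(x,\cdot)=ku(x-1,\cdot)$ is exactly the defining recurrence of the Poisson mass function, which explains the final form).
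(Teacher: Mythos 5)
Your proposal is correct, and for the region $x\geq 0$ it coincides with the paper's own argument: induction on $x$, each step a scalar first-order linear IVP, with your integrating factor $e^{kt}$ performing exactly the computation the paper does via variation of parameters. The genuine divergence is the region $x<0$. The paper disposes of it by citation, invoking Reid's classical theorem on infinite systems of linear ODEs (or Curtain--Zwart) for the uniqueness of the trivial solution, whereas you give a self-contained Picard-type iteration: with $w=e^{kt}u$ the closed subsystem becomes $w(x,t)=k\int_0^t w(x-1,s)\,\d s$, and $n$-fold iteration with a uniform bound $\av{w}\leq M$ gives $\av{w(x,t)}\leq (kt)^n M/n!\to 0$. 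Your route buys transparency, and you correctly identify its cost: since the index set $\brc{x<0}$ has no least element, the estimate needs an a priori function class (solutions bounded on compact time intervals, uniformly in $x$), without which infinite triangular systems can admit nontrivial null solutions; strictly speaking this hypothesis must be written into the uniqueness claim or verified for any competing solution, since the equation alone does not supply it. The paper's citation buys brevity and offloads exactly this function-class bookkeeping to the quoted theorems, which impose analogous boundedness conditions. Modulo making that class explicit, your argument is complete and slightly more informative than the paper's.
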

\begin{proof}
First, let us observe that $u(x,t)=0$ for all $t\in\Real_0^+$, $x<0$. The uniqueness of the trivial solution for $x<0$ follows e.g. from \cite[Corollary 1]{aRei} or more generally from \cite[Theorem 3.1.3]{bCZ}. Let us prove the rest (i.e. $x\geq 0$ by mathematical induction. Obviously, we have that $u(0,t)=Ae^{-kt}$, since $u_t(0,t)=-ku(0,t)+ ku(-1,t)=-ku(0,t)$ and $u(0,0)=A$.\par
Moreover, if we assume that $x\in\Nat_0$ and $u(x,t)= A \frac{k^x}{x!}t^x e^{-kt}$, then $u(x+1,t)$ satisfies
\[
\begin{cases}
u_t(x+1,t)=-k u(x+1,t) + A \frac{k^x}{x!}t^x e^{-kt} , \quad t\in\Real_0^+,\\
u(x,0)=0.
\end{cases}
\]
One could use the variation of parameters to show that  the unique solution is $u(x+1,t)=A \frac{k^{x+1}}{(x+1)!}t^{x+1} e^{-kt}$, which proves the inductive step and consequently finishes the proof.
\end{proof}

Let us analyze the sign and integral preservation of \eqref{e:con-time}.

\begin{lem}
The solution of the problem \eqref{e:con-time} conserves the sign, the integral in time and the sum in the space variable.
\end{lem}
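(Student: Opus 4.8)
The plan is to read all three conservation properties directly off the closed form established in Lemma \ref{l:con-solution}, so that each reduces either to inspecting the signs of the factors or to a single summation/integration identity. Since $u(x,t)=0$ for $x<0$, every claim concerns only the branch $u(x,t)=A\frac{k^x}{x!}t^x e^{-kt}$ with $x\in\Natn$, $t\in\Real_0^+$.

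\emph{Sign conservation} is immediate: with $A>0$ and $k>0$, each of the factors $A$, $k^x$, $t^x$, $\frac{1}{x!}$ and $e^{-kt}$ is nonnegative for $t\ge 0$ and $x\in\Natn$, so $u(x,t)\ge 0$ throughout the domain. For the \emph{sum in the space variable} I would fix $t$ and compute $\sum_{x\in\Int}u(x,t)=A e^{-kt}\sum_{x=0}^\infty\frac{(kt)^x}{x!}=A e^{-kt}e^{kt}=A$, recognizing the exponential series; thus the spatial sum equals the initial mass $A$ for every $t\ge 0$. An alternative that avoids the explicit form is to set $S(t)=\sum_x u(x,t)$ and differentiate under the sum, using $\sum_x\nabla_x u(x,t)=\sum_x\bra{u(x,t)-u(x-1,t)}=0$ by telescoping, so that $S'(t)=-k\sum_x\nabla_x u(x,t)=0$ and hence $S(t)\equiv S(0)=A$.

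For the \emph{integral in time} I would fix $x\in\Natn$ and evaluate $\int_0^\infty u(x,t)\,\d t=A\frac{k^x}{x!}\int_0^\infty t^x e^{-kt}\,\d t=A\frac{k^x}{x!}\cdot\frac{x!}{k^{x+1}}=\frac{A}{k}$, using the Gamma integral $\int_0^\infty t^x e^{-kt}\,\d t=\frac{x!}{k^{x+1}}$. Since this value is independent of $x$ for all $x\ge 0$, it is the discrete-space counterpart of the classical time-section identity, with the cumulative initial data summing to $A$ at every $x\ge 0$ playing the role of $\frac{1}{k}\int_{-\infty}^x\phi(s)\,\d s$.

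The main obstacle — indeed essentially the only nonroutine point — is not the identities themselves but the justification of the limit interchanges. In the telescoping argument one must check that the tail boundary terms vanish and that term-by-term differentiation of $S(t)$ is legitimate, which I would justify via the locally uniform convergence of the Poisson series. If instead one argues purely from the closed form in \eqref{e:con:solution}, the exponential sum and the Gamma integral both converge absolutely, so these convergence concerns dissolve and the three computations above complete the proof.
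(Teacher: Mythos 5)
Your proof is correct and follows essentially the same route as the paper: sign conservation by inspecting the factors in \eqref{e:con:solution}, the spatial sum via the exponential series, and the time integral by direct evaluation of the closed form. The only difference is that you carry out the Gamma-integral computation $\int_0^\infty t^x e^{-kt}\,\d t = x!/k^{x+1}$ explicitly, where the paper invokes integration by parts and defers the details to the more general Theorem \ref{t:ts-integral-2}; your optional telescoping argument is extra and not needed once the closed form is in hand.
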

\begin{proof}
The sign preservation follows from the positivity of all terms in \eqref{e:con:solution}. Next, we could use integration by parts to obtain (we skip the details since we prove this result in more general settings in Theorem \ref{t:ts-integral-2})
\begin{equation}\label{e:con:Erlang}
\int_{0}^\infty u(x,t) \d t = \frac{A}{k}.
\end{equation}
Similarly, summing over $x$ we get
\begin{equation}\label{e:con:Poisson}
\sum\limits_{x=0}^{\infty} A \frac{k^x}{x!}t^x e^{-kt} = A e^{-kt} \sum\limits_{x=0}^{\infty} \frac{(kt)^x}{x!} = A e^{-kt} e^{kt}=  A.
\end{equation}
\end{proof}

\begin{figure}[t]
\vspace{42pt}
\begin{center}
\includegraphics[height=1.7in,width=1.7in]{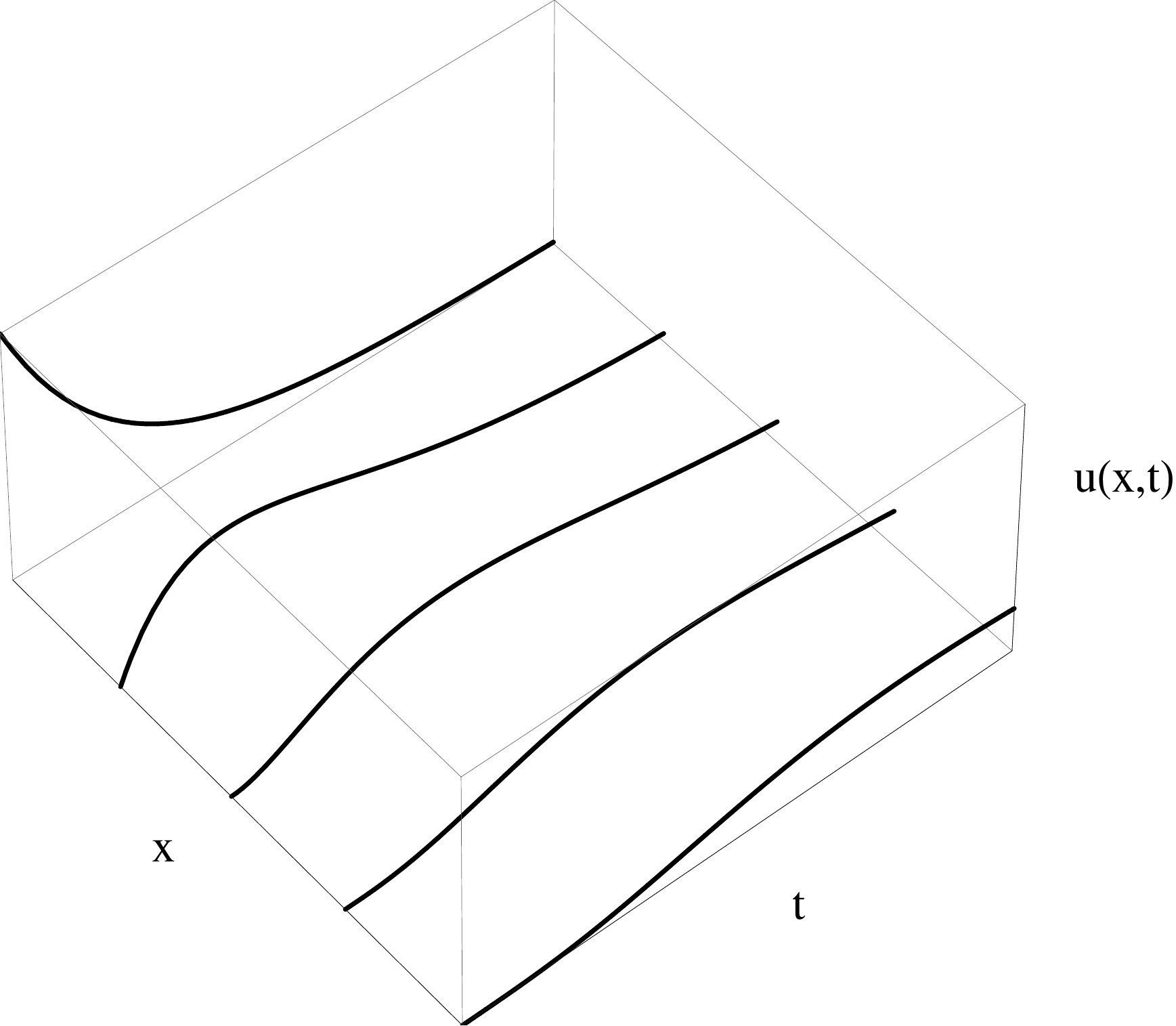} \hspace{0.2in} \ 
\includegraphics[height=1.7in,width=1.7in]{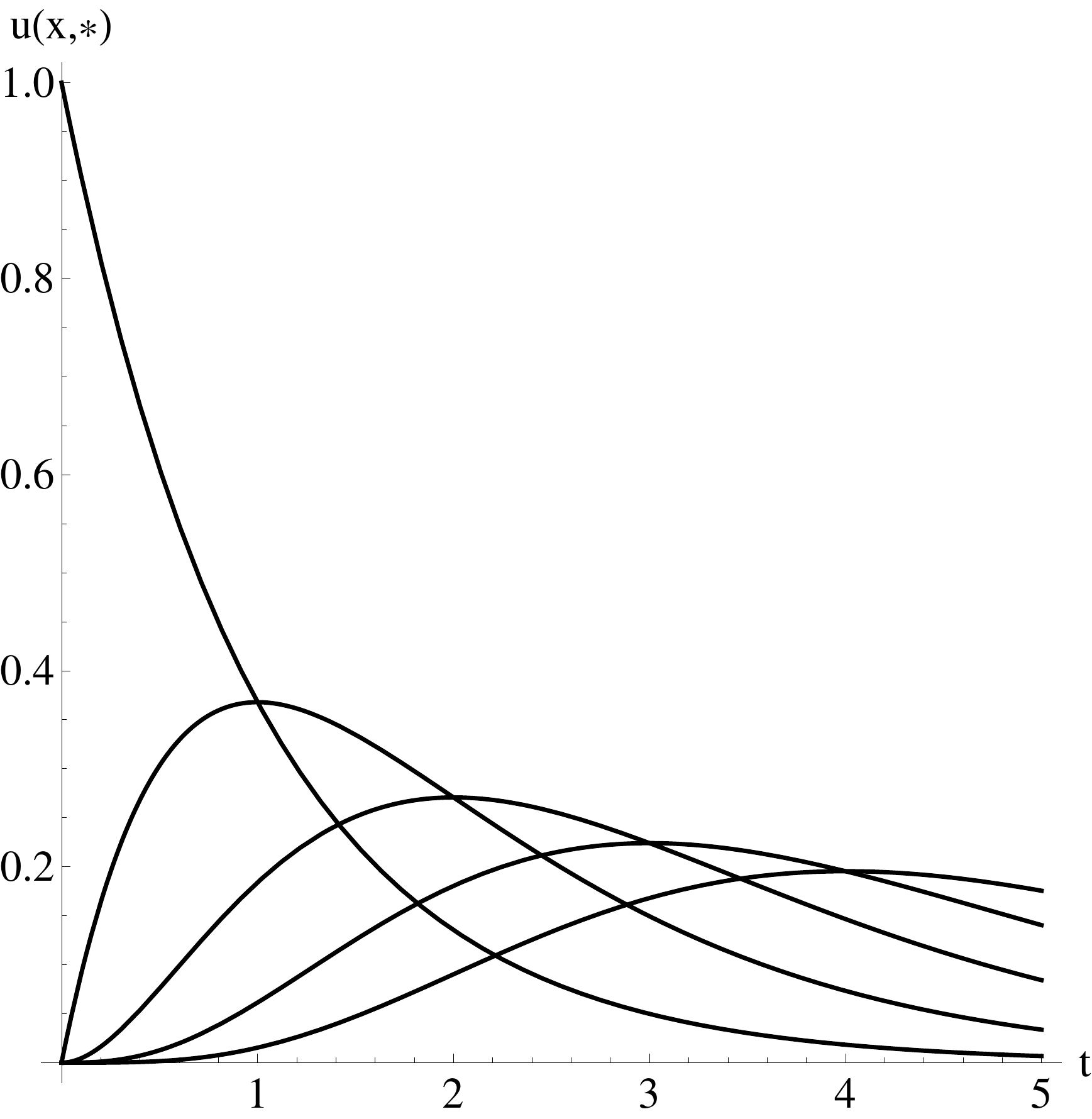} \hspace{0.2in} \
\includegraphics[height=1.7in,width=1.7in]{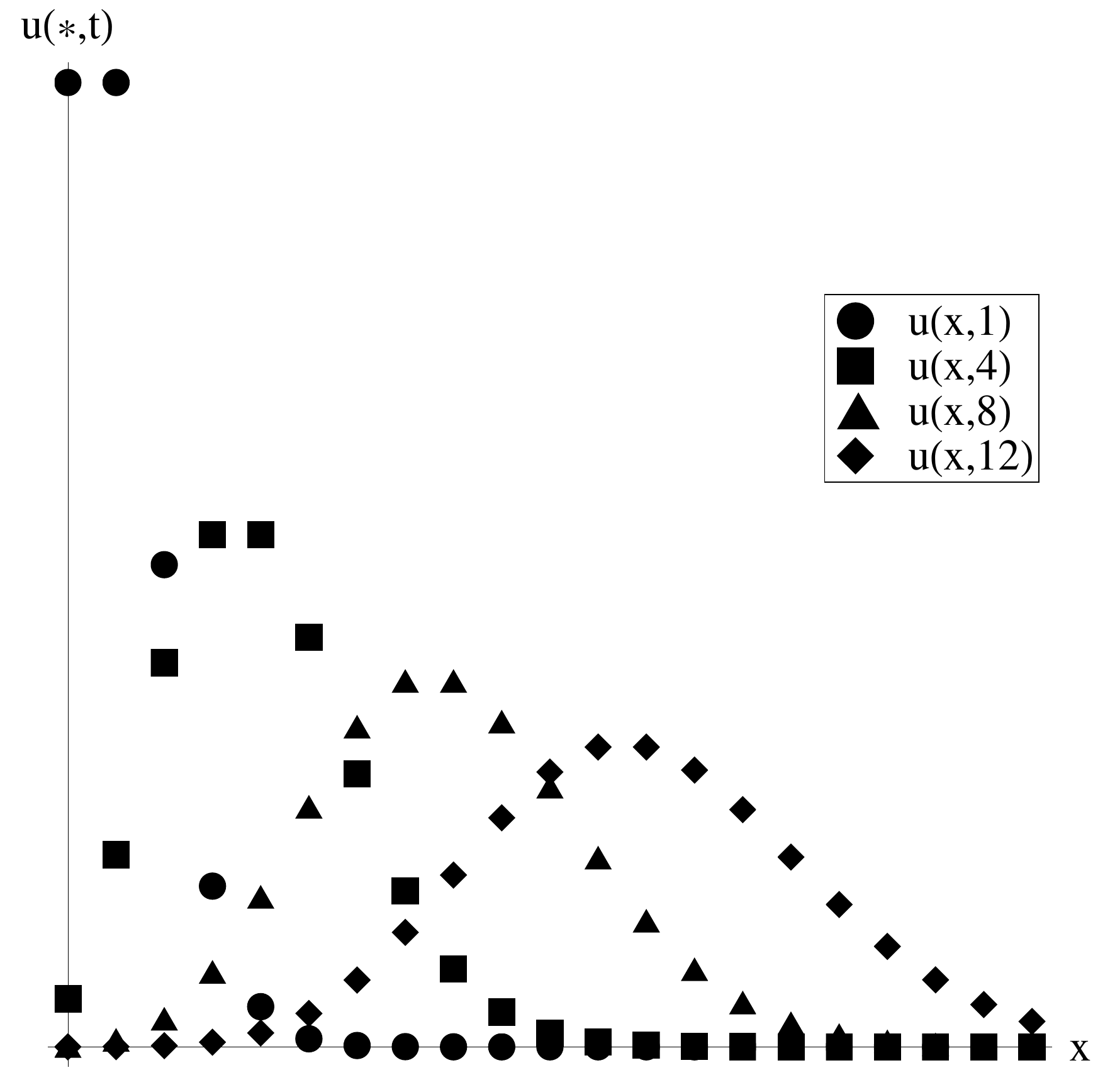}
\end{center}

\caption{Solution of the transport equation with discrete space and continuous time \eqref{e:con-time} with $A=1$ and $k=1$.}
\label{f:con-time}

\end{figure}


If we go deeper and analyze values obtained in \eqref{e:con:Erlang} and \eqref{e:con:Poisson} we get the first indication of the relationship of the semidiscrete transport equation with stochastic processes.

\begin{rem}\label{r:con-poisson}
If $A=k$ then time sections of the solution \eqref{e:con:solution} generate the probability density function of Erlang distributions (note that for $x=0$ we get the exponential distribution and that Erlang distributions are special cases of Gamma distributions).

Similarly, if $A=1$ the space sections of \eqref{e:con:solution} form the probability mass functions of  Poisson distributions.

Consequently, if $A=k=1$ the solution $u(x,t)$ describes Poisson process. All these facts are further discussed in Section \ref{s:processes}.
\end{rem}

We conclude this section with two natural extensions. Firstly, we mention possible generalizations to other discrete space structures.

\begin{rem}\label{r:con-mx}
If we consider the problem \eqref{e:con-time} on a domain with a discrete space having the constant step $\mx>0$, not necessarily $\mx=1$, we obtain qualitatively equivalent problem, since
\begin{align*}
u_t(x,t)+k \frac{u(x,t)-u(x-\mx,t)}{\mx} &=u_t(x,t)+ \frac{k}{\mx} \prn{u(x,t)-u(x-\mx,t)}\\
&=u_t(x,t)+\hat{k}\nabla_x u(x,t).
\end{align*}
In contrast to the rest of this paper, the value of $\mx$ does not play essential role here. Therefore, for presentation purposes, we restricted our attention to $\mx=1$.
\end{rem}

Finally, we discuss more general initial condition and show that the solution is the sum of point initial conditions which justifies their use not only in this section but also in the remainder of this paper.

\begin{cor}\label{c:con-general-ic}
The unique solution of
\begin{equation}\label{e:con-time-gen-ic}
\begin{cases}
u_t(x,t)+k\nabla_x u(x,t) = 0, \quad t\in\Real_0^+, x\in\Int,\\
u(x,0)=C_x.
\end{cases}
\end{equation}
is given by
\begin{equation}\label{e:con-time-gen-ic-sol}
u(x,t)=\sum\limits_{i=-\infty}^{x} C_{i} \frac{(kt)^{x-i}}{(x-i)!} e^{-kt}.
\end{equation}
\end{cor}
\begin{proof}
One could split \eqref{e:con-time-gen-ic} into problems with point initial conditions, use Lemma \ref{l:con-solution} to solve them and then employ linearity of the equation to get \eqref{e:con-time-gen-ic-sol}.
\end{proof}

\section{Discrete Space and Discrete Time}\label{s:dis-time}
In this section, we assume that both time and space are homogenously discrete with steps $\mt>0$ and $\mx>0$ respectively. In other words, we consider a discrete domain
\[
\Omega=\brc{(x,t)=\prn{m\mx,n\mt},\ \textrm{with } m\in\Int, n\in\Natn}.
\]
The transport equation and the corresponding problem then have the form

\begin{equation}\label{e:dis-time}
\begin{cases}
\Delta_t u(x,t)+k\nabla_x u(x,t) = 0, \quad (x,t)\in\Omega,\\
u(x,0)=\begin{cases}
A, \quad x=0,\\
0, \quad x\neq 0,
\end{cases}
\end{cases}
\end{equation}
where $A>0$, $k>0$. Using the definition of partial differences in \eqref{e:nabla}, we can easily rewrite the equation in \eqref{e:dis-time} into
\begin{equation}\label{e:dis-time-2}
u(x,t+\mt)=\prn{1-\frac{k\mt}{\mx}}u(x,t)+\frac{k\mt}{\mx}u(x-\mx,t)
\end{equation}
and derive the unique solution.
\begin{lem}\label{l:dis-solution}
Let $m\in\Int$ and $n\in\Nat_0$. The unique solution of \eqref{e:dis-time} has the form:
\begin{equation} \label{e:dis:solution}
		\displaystyle u(m\mu_{x}, n\mu_{t}) = \left\lbrace
		\begin{array}{l l}
			\displaystyle A \binom{n}{m} \left( 1 - \frac{k\mu_{t}}{\mu_{x}} \right)^{n-m} \left( \frac{k\mu_{t}}					{\mu_{x}} \right)^{m}, & \quad n \geq m \geq 0,\\
			&\\
			0, & \quad 0\leq n < m,\textrm{ or } m<0.
		\end{array}	
		\right.
	\end{equation}
\end{lem}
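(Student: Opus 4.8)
The plan is to establish the closed form \eqref{e:dis:solution} by induction on the number of time steps $n$, taking advantage of the fact that the rewritten equation \eqref{e:dis-time-2} is an \emph{explicit} one-step recurrence. Since \eqref{e:dis-time-2} expresses each value $u(x,t+\mt)$ directly in terms of $u(x,t)$ and $u(x-\mx,t)$ at the previous time level, the solution is uniquely determined by forward iteration from the initial data; no separate existence or uniqueness argument is needed beyond checking that the proposed formula satisfies both the recurrence and the initial condition. For transparency I would abbreviate $p := k\mt/\mx$, so that \eqref{e:dis-time-2} reads $u(x,t+\mt) = (1-p)\,u(x,t) + p\,u(x-\mx,t)$, which exposes it as exactly the Pascal-type recurrence underlying the binomial distribution.

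For the base case $n=0$ I would simply verify that \eqref{e:dis:solution} reproduces the prescribed initial condition: at $m=0$ it yields $A\binom{0}{0}(1-p)^0 p^0 = A$, while for $m>0$ one is in the regime $n<m$ and for $m<0$ in the regime $m<0$, both giving $0$. The inductive step is where the combinatorial content lies. Assuming \eqref{e:dis:solution} at time level $n$, I would insert it into the recurrence and split according to the value of $m$: the interior range $1\le m\le n$, the two boundary indices $m=0$ and $m=n+1$, and the out-of-range indices $m<0$ and $m>n+1$. In the interior case both terms on the right-hand side are supplied by the inductive hypothesis, and after factoring out $A(1-p)^{\,n+1-m}p^{\,m}$ the remaining bracket is $\binom{n}{m}+\binom{n}{m-1}$, which collapses to $\binom{n+1}{m}$ by Pascal's rule --- this single identity is the essential algebraic step.

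The boundary and out-of-range cases then fall out with care rather than difficulty: for $m=0$ and $m=n+1$ exactly one of the two contributing values lies outside the support and vanishes, and the surviving term already carries the correct power together with $\binom{n+1}{0}=1$ or $\binom{n+1}{n+1}=1$; for $m<0$ and $m>n+1$ both contributing values are zero. The main obstacle is thus purely one of bookkeeping, namely ensuring that the support condition $n\ge m\ge 0$ is honored across the step so that the interior formula is never invoked at an index where the inductive hypothesis does not apply. Once the case distinction is organized, Pascal's rule closes every case and the induction is complete.
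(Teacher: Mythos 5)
Your proof is correct, but it takes a genuinely different route from the paper. You induct on the time level $n$: treating \eqref{e:dis-time-2} as an explicit one-step recurrence $u(x,t+\mt)=(1-p)u(x,t)+p\,u(x-\mx,t)$ with $p=k\mt/\mx$, you verify the closed form via Pascal's rule $\binom{n}{m}+\binom{n}{m-1}=\binom{n+1}{m}$, with boundary cases $m=0$, $m=n+1$ and the out-of-range indices handled by the vanishing of one or both contributing terms. The paper instead inducts on the \emph{space} index $m$: it first disposes of $m<0$ by observing that each such value is a linear combination of $n+1$ zero initial data, computes the branch $u(0,n\mt)=A\prn{1-\frac{k\mt}{\mx}}^{n}$ directly, and then derives $u((m+1)\mx,\cdot)$ from $u(m\mx,\cdot)$ by unrolling the time recursion into nested sums, which it evaluates with the falling-factorial identity $\sum_{r_{m+1}=0}^{n-1}\cdots\sum_{r_1=0}^{r_2-1}1=\frac{n^{\underline{m+1}}}{(m+1)!}=\binom{n}{m+1}$. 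Your approach buys brevity and elementarity --- a single application of Pascal's rule per step, with existence and uniqueness falling out automatically from forward iteration (the paper's separate $m<0$ argument becomes just another case of your induction); the paper's approach is closer to a \emph{derivation} than a verification, proceeding branch-by-branch in space exactly as the variation-of-constants method does for general time scales in Section \ref{s:ts-time} (compare Lemma \ref{l:het-discrete}), which makes it the structurally consistent choice there even though it is combinatorially heavier. One small bookkeeping economy available to you: adopting the standard conventions $\binom{n}{-1}=0$ and $\binom{n}{n+1}=0$ lets the interior Pascal computation absorb the boundary cases $m=0$ and $m=n+1$, shrinking your case analysis to interior versus out-of-range.
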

\begin{proof}
First, let us show that the solution vanishes uniquely for $u(-m\mx,n\mx)= 0$,  for all $m, n\in\Nat$. Consulting \eqref{e:dis-time-2}, we observe that the value of $u(-m\mx,n\mx)$ is obtained as a linear combination of initial conditions $u(-m\mx,0)$, $u(-(m+1)\mx,0)$, \ldots, $u(-(m+n)\mx,0)$, i.e. a linear combination of $n+1$ zeros.

We prove the rest of the statement by induction. Apparently,
\[
u(0,n\mu_{t}) = \prn{1-\frac{k\mt}{\mx}}^{n} u(0,0) = A \prn{1-\frac{k\mt}{\mx}}^{n}.
\]
Next, let us assume that $u(m\mu_{x}, n\mu_{t})$ satisfies \eqref{e:dis:solution}, then
\begin{displaymath}
			\begin{array}{l}
					\displaystyle u \left( (m+1)\mu_{x},n\mu_{t} \right)  =  \displaystyle \prn{1-\frac{k\mt}{\mx}}^{n} 												u((m+1)\mu_{x}, 0) \\
\quad + \sum \limits_{r_{m+1}=0}^{n-1}  \prn{1-\frac{k\mt}{\mx}}^{n-1-r_{m+1}} A \prn{1-\frac{k\mt}{\mx}}^{r_{m+1}-m} \prn{\frac{k\mt}{\mx}}^{m+1} \sum 							\limits_{r_{m}=0}^{r_{m+1}-1} \ldots \sum \limits_{r_{2}=0}^{r_{3}-1} \sum 												\limits_{r_{1}=0}^{r_{2}-1} 1\\

= A \prn{1-\frac{k\mt}{\mx}}^{n-(m+1)} \prn{\frac{k\mt}{\mx}}^{m+1} \sum \limits_{r_{m+1}=0}^{n-1} \ldots \sum 										\limits_{r_{2}=0}^{r_{3}-1} \sum \limits_{r_{1}=0}^{r_{2}-1} 1.
				\end{array}
\end{displaymath}

At this stage, let us observe the properties of the falling factorials (see e.g. \cite[Section 2.1]{bEla} or \cite[Section 2.1]{bKP}) to get that
\[
\sum \limits_{r_{m+1}=0}^{n-1} \ldots \sum \limits_{r_{2}=0}^{r_{3}-1} \sum \limits_{r_{1}=0}^{r_{2}-1} 1=\frac{n^{\underline{m+1}}}{(m+1)!} = {n \choose m+1},
\]
which finishes the proof.
\end{proof}
The closed-form solution enables us to analyze sign and integral conservation.
\begin{lem}\label{l:dis-preservation}
If the inequality
\begin{equation}\label{e:D1}
1-\frac{k\mt}{\mx}>0, \tag{D1}
\end{equation}
holds then the solution of \eqref{e:dis-time} satisfies
\begin{enumerate}[\upshape (i)]
	\item $u(x,t)\geq 0$,	
	\item $\sum\limits_{m=-\infty}^{\infty} u(m\mx,t)$ is constant for all $t=\brc{0,\mt,2\mt,\ldots}$,
	\item $\sum\limits_{n=0}^{\infty} u(x,n\mt)$ is constant for all $x=\brc{0,\mx,2\mx,\ldots}$.
\end{enumerate}
\end{lem}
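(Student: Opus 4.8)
The plan is to read off everything directly from the closed-form solution in Lemma~\ref{l:dis-solution}, writing $p:=\frac{k\mt}{\mx}$ for brevity. The single structural observation driving all three parts is that \eqref{e:D1} together with $k,\mt,\mx>0$ forces $0<p<1$, and that the nonvanishing values of the solution are exactly $A\binom{n}{m}(1-p)^{n-m}p^m$ for $n\geq m\geq 0$, i.e. $A$ times the binomial probability mass function with parameters $n$ and $p$. Once this is recorded, parts (i) and (ii) are immediate and only part (iii) requires a genuine computation.

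For part (i) I would simply note that for $0<p<1$ each factor $A$, $\binom{n}{m}$, $(1-p)^{n-m}$, and $p^m$ is nonnegative, so $u(x,t)\geq 0$ throughout the domain, the remaining entries being identically zero. For part (ii) I would fix $t=n\mt$ and observe that the bilateral sum over space reduces to the finite range $0\le m\le n$, since $u(m\mx,n\mt)=0$ for $m<0$ and for $m>n$; the binomial theorem then gives
\[
\sum_{m=-\infty}^{\infty} u(m\mx,n\mt)=A\sum_{m=0}^{n}\binom{n}{m}(1-p)^{n-m}p^m=A\bigl((1-p)+p\bigr)^n=A,
\]
which is independent of $n$ and hence constant in $t$.

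For part (iii), which I expect to be the only nontrivial step, I would fix $x=m\mx$ with $m\geq 0$, discard the vanishing terms $n<m$, and reindex the time sum by $j=n-m$:
\[
\sum_{n=0}^{\infty} u(m\mx,n\mt)=A\,p^m\sum_{j=0}^{\infty}\binom{j+m}{m}(1-p)^j.
\]
The remaining series is the negative-binomial (generalized geometric) series $\sum_{j\geq 0}\binom{j+m}{m}x^j=(1-x)^{-(m+1)}$, valid for $|x|<1$; here $x=1-p\in(0,1)$ by \eqref{e:D1}, so it converges and equals $p^{-(m+1)}$. Substituting yields $A\,p^m\cdot p^{-(m+1)}=A/p=\frac{A\mx}{k\mt}$, which does not depend on $m$ and is therefore constant in $x$.

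The main obstacle is justifying this last series identity; I would either invoke the generalized binomial theorem with the negative exponent $-(m+1)$, or derive it from scratch by differentiating the geometric series $(1-x)^{-1}=\sum_{j\geq 0}x^j$ exactly $m$ times, which reproduces $\binom{j+m}{m}$ as the coefficient of $x^j$ and supplies the convergence on $(-1,1)$ that \eqref{e:D1} guarantees. Everything else is bookkeeping with the support of the solution.
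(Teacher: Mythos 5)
Your proof is correct, but it takes a genuinely different route from the paper. You read everything off the closed-form solution of Lemma~\ref{l:dis-solution}: part (ii) becomes the binomial theorem and part (iii) the negative binomial series $\sum_{j\geq 0}\binom{j+m}{m}x^j=(1-x)^{-(m+1)}$ with $x=1-p\in(0,1)$, both of which you justify adequately (the $m$-fold differentiation of the geometric series is a clean way to get the identity together with its radius of convergence). The paper instead never touches the explicit solution for (ii) and (iii): it sums the recurrence \eqref{e:dis-time-2} over $x$ (respectively over $t$) and cancels, concluding that consecutive space-sums, respectively the time-sums at $x$ and $x-\mx$, coincide. Each approach buys something. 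Yours delivers the explicit values of the conserved quantities, namely $A$ in space and $A/p=\frac{A\mx}{k\mt}$ in time, which the paper only extracts later (in the proof of Theorem~\ref{t:dis-pmf}, via the geometric series at $x=0$), and it settles convergence of the time-sums outright, a point the paper's cancellation argument glosses over (subtracting $(1-p)\sum_n u(x,n\mt)$ from both sides tacitly requires that sum to be finite, which your series computation establishes). The paper's recurrence-summing argument, on the other hand, is the one that survives when no closed form is available: it is exactly the template reused in Theorems~\ref{t:ts-integral-2} and~\ref{t:ts-integral-3} for general time scales, where your series method has no analogue. Only part (i) is proved identically in both, by inspection of the sign of each factor in \eqref{e:dis:solution} under \eqref{e:D1}.
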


\begin{proof}
\begin{enumerate}[\upshape (i)]
	\item The inequality follows immediately from Lemma \ref{l:dis-solution} .
	\item If we fix $t$ and sum up the equation  \eqref{e:dis-time-2} over $x$ we get
		\[
			\sum\limits_{m=-\infty}^{\infty} u(m\mx,t+\mt)= \prn{1-\frac{k\mt}{\mx}} \sum\limits_{m=-\infty}^{\infty} u(m\mx,t)+\frac{k\mt}{\mx} \sum\limits_{m=-\infty}^{\infty}u((m-1)\mx,t).
		\]
		The assumption (D1) implies that the sum on the left hand side is a linear combination of two sums on the right hand side. Since these sums are equal, we get that 
		\[
		 \sum\limits_{m=-\infty}^{\infty} u(m\mx,t+\mt)= \sum\limits_{m=-\infty}^{\infty} u(m\mx,t).
		\]
	\item Similarly, one could sum up the equation  \eqref{e:dis-time-2} over $t$ to get for a fixed $x>0$
		\[
			\sum\limits_{n=1}^{\infty} u(x,n\mt)= \prn{1-\frac{k\mt}{\mx}} \sum\limits_{n=0}^{\infty} u(x,n\mt)+\frac{k\mt}{\mx} \sum\limits_{n=0}^{\infty}u(x-\mx,n\mt).
		\]
		Since $u(x,0)=0$ for $x>0$ we have that
		\[
			\sum\limits_{n=0}^{\infty} u(x,n\mt) = \sum\limits_{n=0}^{\infty}u(x-\mx,n\mt).
		\]
\end{enumerate}
\end{proof}

Once again, we could study the solutions' relationship to probability distributions.

\begin{thm}\label{t:dis-pmf}
Let $u(x,t)$ be a solution of \eqref{e:dis-time}. Then the space and time sections $\mx u(x,\cdot)$ and $\mt u(\cdot,t)$ form probability mass functions if and only if the assumptions (D1),
\begin{equation}\label{e:D2}
\frac{A\mx}{k}=1, \tag{D2}
\end{equation}
and
\begin{equation}\label{e:D3}
A\mx=1, \tag{D3}
\end{equation}
hold.
\end{thm}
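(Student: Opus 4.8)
The plan is to exploit the explicit closed form of the solution from Lemma~\ref{l:dis-solution} and reduce the biconditional to three essentially scalar computations: one for positivity, and one normalization in each of the two directions. Writing $p=\frac{k\mt}{\mx}$ and $q=1-p$, the solution reads $u(m\mx,n\mt)=A\binom{n}{m}q^{n-m}p^{m}$ for $n\geq m\geq 0$ (and $0$ otherwise), so that each space section $u(\cdot,t)$ and each time section $u(x,\cdot)$ is, up to the constant $A$, a shifted binomial-type weight. Recall that as a probability mass function the space section (a function of $x=m\mx$) should be weighted by $\mx$ and summed over $m$, while the time section (a function of $t=n\mt$) should be weighted by $\mt$ and summed over $n$.

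For sufficiency I would first note that (D1) is exactly the statement $q>0$; since $p>0$ is automatic, this gives $u(m\mx,n\mt)\geq 0$, which is Lemma~\ref{l:dis-preservation}(i). It then remains to verify that each section has total mass one. Fixing $t=n\mt$ and summing over $m$, the (finite) binomial theorem gives $\sum_{m=0}^{n}u(m\mx,n\mt)=A(q+p)^{n}=A$, so the $\mx$-weighted space section sums to $A\mx$, which equals $1$ precisely under (D3). Fixing $x=m\mx$ and summing over $n\geq m$, the reindexing $n=m+j$ turns the sum into the negative-binomial series $\sum_{j\geq 0}\binom{m+j}{m}q^{j}=(1-q)^{-(m+1)}=p^{-(m+1)}$, convergent because $0\le q<1$ under (D1); hence $\sum_{n}u(m\mx,n\mt)=A\,p^{m}p^{-(m+1)}=\frac{A\mx}{k\mt}$, and the $\mt$-weighted time section sums to $\frac{A\mx}{k}$, which equals $1$ precisely under (D2).

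For necessity I would run the same three computations backwards. Non-negativity of a section forces $q\geq 0$: the value at $m=0$, $n=1$ equals $Aq$, which must be non-negative, so the strict inequality (D1) corresponds to the non-degenerate (genuine two-outcome Bernoulli) regime. The key structural point is that both total masses computed above are \emph{independent of the fixed coordinate} ($A\mx$ does not depend on $n$, and $\frac{A\mx}{k}$ does not depend on $m$), so demanding that each section sum to $1$ yields exactly (D3) and (D2) respectively, with no residual freedom. Combining the three equivalences gives the claimed result.

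The main obstacle I anticipate is the time-direction summation: unlike the space sum it is an infinite series, so I must both justify convergence and evaluate it in closed form. The clean route is the negative-binomial identity $\sum_{j\ge 0}\binom{m+j}{m}q^{j}=(1-q)^{-(m+1)}$, whose validity hinges on $0\le q<1$, and establishing that this range is exactly what (D1) supplies (together with the automatic $p>0$) is the technical heart of the argument. A secondary subtlety lies in the boundary case $q=0$ of the necessity direction, where the solution collapses to a diagonal point mass; this degenerate distribution must be flagged and either excluded by the strict form of (D1) or handled as a separate limiting case.
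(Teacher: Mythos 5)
Your proof is correct, but it takes a genuinely different route from the paper's. The paper never resums the closed-form solution over whole sections: it invokes Lemma \ref{l:dis-preservation}, whose parts (ii) and (iii) say the space and time sums are \emph{conserved}, and thereby reduces the normalization question to two privileged sections --- the initial space section, whose sum is $A\mx$ directly from the initial condition (forcing (D3)), and the $x=0$ time section, where $u(0,n\mt)=A\prn{1-\frac{k\mt}{\mx}}^n$ turns the normalization into the plain geometric series $A\mt\sum_{n\geq 0}\prn{1-\frac{k\mt}{\mx}}^n=\frac{A\mx}{k}$ (forcing (D2)). You instead sum the explicit solution of Lemma \ref{l:dis-solution} over \emph{every} section, which costs you the negative binomial identity $\sum_{j\geq 0}\binom{m+j}{m}q^j=(1-q)^{-(m+1)}$ in the time direction but buys independence from the conservation lemma and makes the coordinate-independence of the two totals ($A$ and $A/p$) visible by direct computation; both arguments are sound. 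On the necessity side you are in fact more careful than the paper: its geometric-series step tacitly assumes $\av{1-\frac{k\mt}{\mx}}<1$ and says nothing about the boundary case $k\mt=\mx$ (i.e.\ $q=0$), where the solution degenerates to the diagonal point masses $u(m\mx,n\mt)=A\,\delta_{mn}$; there the suitably weighted sections are still (degenerate) probability mass functions whenever (D2) and (D3) hold, so the strict inequality (D1) is not literally forced by the pmf property. This roughness in the ``only if'' direction, which you correctly flag as needing either exclusion of degenerate distributions or separate treatment, is passed over silently in the paper's proof.
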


\begin{proof}
Lemma \ref{l:dis-preservation} yields that the solutions are nonnegative and conserve sums. It suffices to include step-lengths $\mx$ and $\mt$ and identify conditions under which $\mx\sum_x u(x,0)=1$ and $\mt\sum_t u(0,t)=1$. Given the initial condition, the former sum is equal to $A\mx$. Hence the assumption (D3). Finally, since $u(-\mx,t)=0$, the equation \eqref{e:dis-time-2} implies that $u(0,n\mt)=A\prn{1-\frac{k\mt}{\mx}}^n$. Consequently, 
\[
	1=A\mt \sum\limits_{n=0}^{\infty}  \prn{1-\frac{k\mt}{\mx}}^n=\frac{A\mx}{k}.
\]
\end{proof}

\begin{figure}[t]
\vspace{42pt}
\begin{center}
\includegraphics[height=1.7in,width=1.7in]{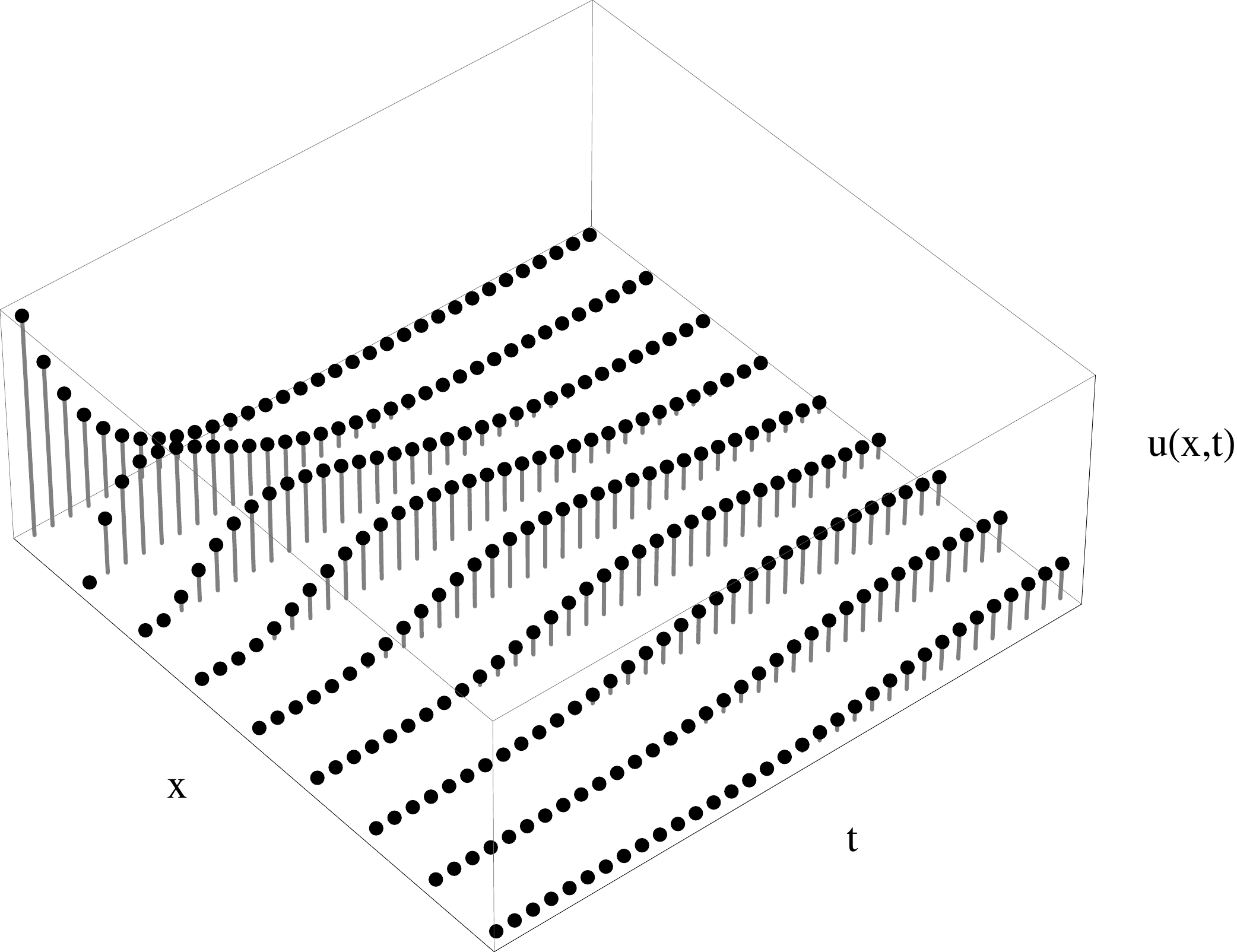} \hspace{0.2in} \ 
\includegraphics[height=1.7in,width=1.7in]{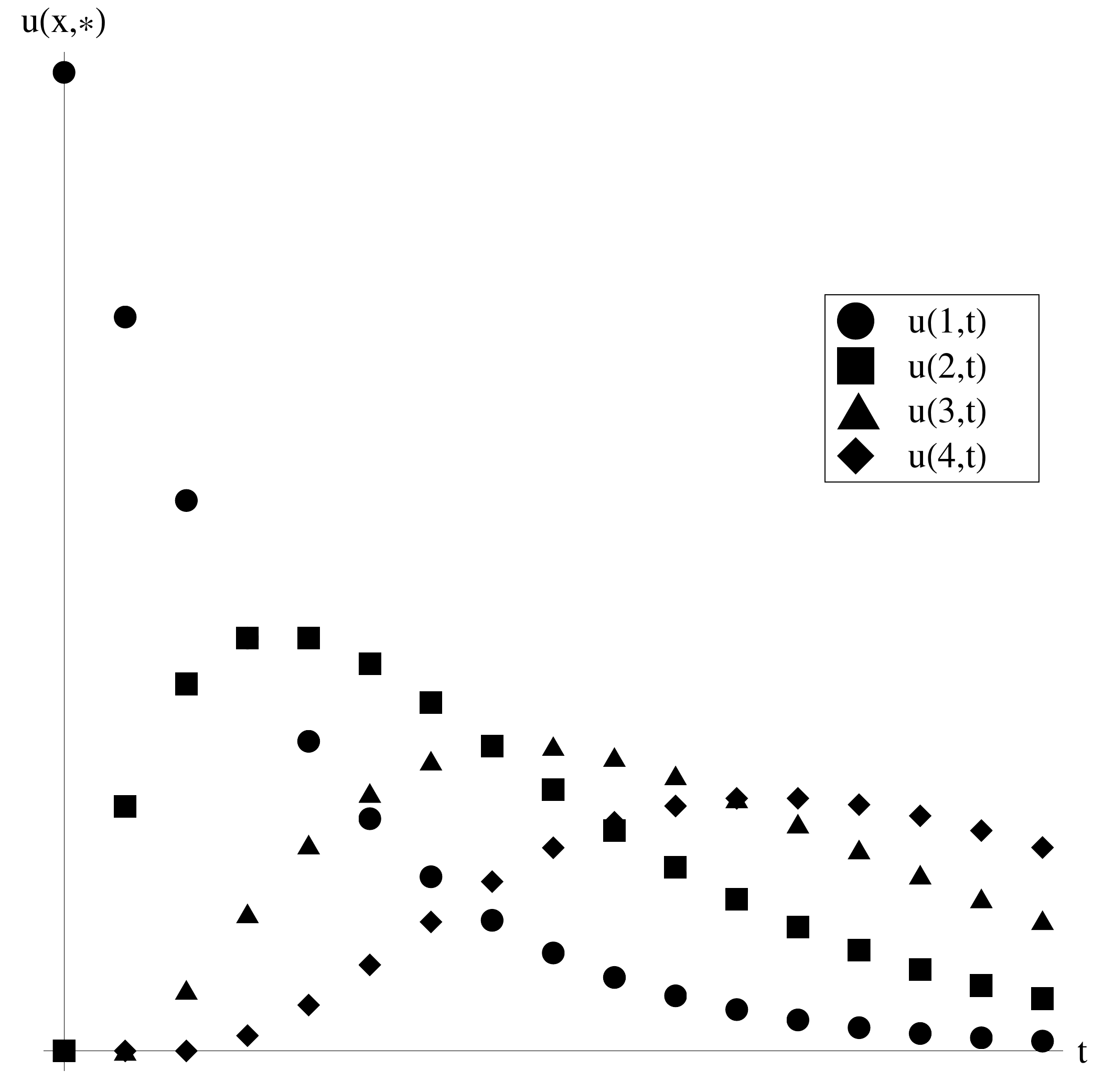} \hspace{0.2in} \
\includegraphics[height=1.7in,width=1.7in]{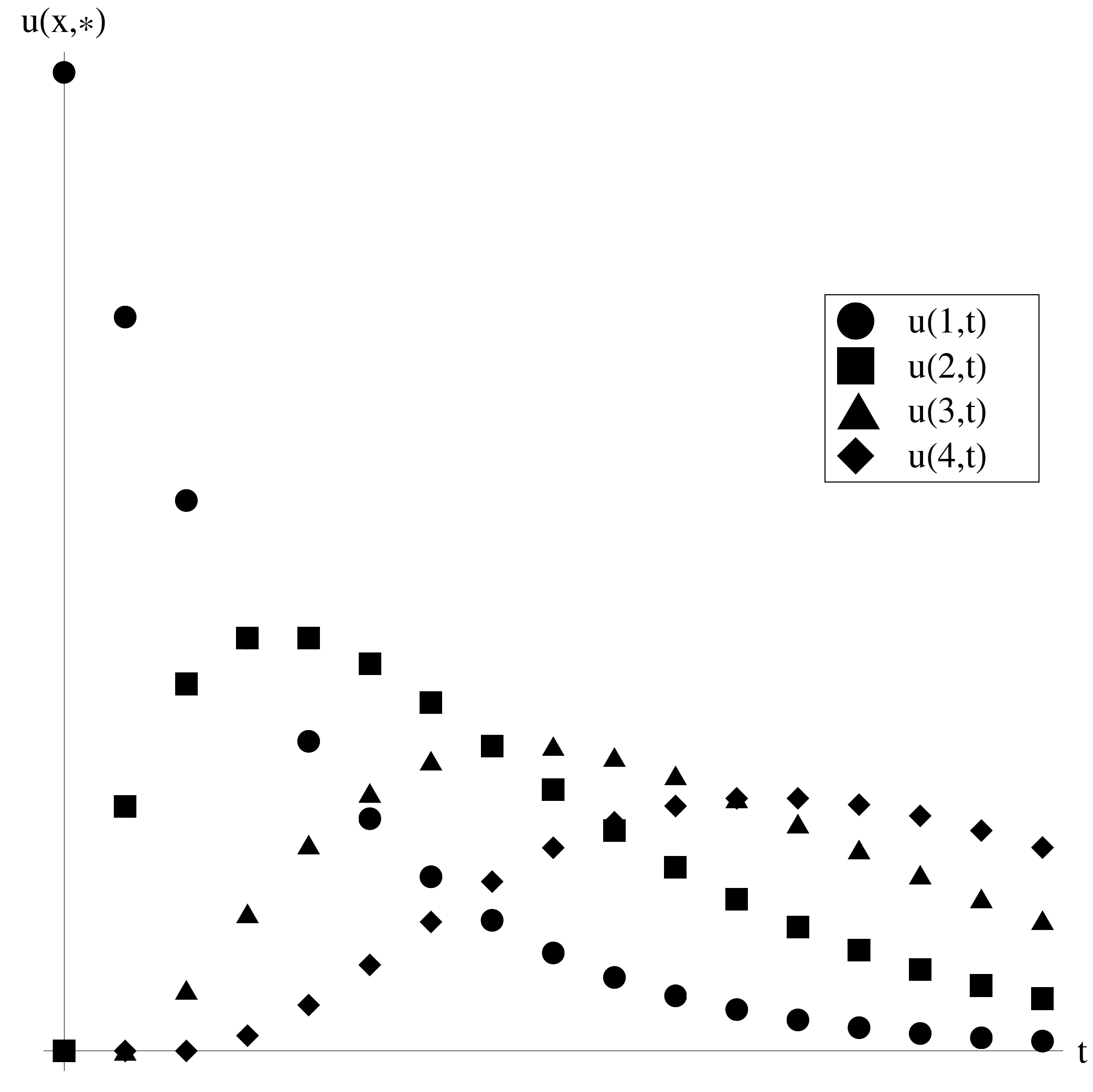}
\end{center}

\caption{Solution of the transport equation with discrete space and discrete time \eqref{e:dis-time} with $A=1$, $k=1$, $\mt=.25$ and $\mx=1$.}
\label{f:dis-time}
\end{figure}


\begin{cor}\label{c:dis-pmf}
Let $u(x,t)$ be a solution of \eqref{e:dis-time}. Then the space and time sections $\mx u(x,\cdot)$ and $\mt u(\cdot,t)$ form probability mass functions if and only if $k=1$, $\mt<\mx$ and $A=\frac{1}{\mx}$.
\end{cor}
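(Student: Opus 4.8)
The plan is to derive this corollary directly from Theorem \ref{t:dis-pmf} by a short algebraic reduction of the conditions \eqref{e:D1}, \eqref{e:D2} and \eqref{e:D3} to the three stated requirements. Since Theorem \ref{t:dis-pmf} already asserts that $\mx u(x,\cdot)$ and $\mt u(\cdot,t)$ are probability mass functions if and only if \eqref{e:D1}--\eqref{e:D3} hold simultaneously, all that is left is to show that the conjunction of \eqref{e:D1}, \eqref{e:D2} and \eqref{e:D3} is equivalent to the conjunction of $k=1$, $\mt<\mx$ and $A=\tfrac{1}{\mx}$.

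For the forward implication, I would first read off from \eqref{e:D3} that $A\mx=1$, i.e. $A=\tfrac{1}{\mx}$. Substituting this into \eqref{e:D2}, which states $\tfrac{A\mx}{k}=1$, collapses the left-hand side to $\tfrac{1}{k}$ and hence forces $k=1$. Finally, inserting $k=1$ into \eqref{e:D1} turns the inequality $1-\tfrac{k\mt}{\mx}>0$ into $1-\tfrac{\mt}{\mx}>0$, which (recalling $\mx>0$) is precisely $\mt<\mx$.

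The converse runs the same computation in reverse: assuming $A=\tfrac{1}{\mx}$ recovers \eqref{e:D3}; combining this with $k=1$ gives $\tfrac{A\mx}{k}=1$, which is \eqref{e:D2}; and $k=1$ together with $\mt<\mx$ gives $1-\tfrac{k\mt}{\mx}=1-\tfrac{\mt}{\mx}>0$, which is \eqref{e:D1}. Every manipulation above is a reversible algebraic step, so no genuine obstacle arises; the only point demanding a little care is to observe that the two linear equalities \eqref{e:D2} and \eqref{e:D3} jointly pin down both $A$ and $k$, after which \eqref{e:D1} is exactly what encodes the constraint on the relative sizes of the time and space steps.
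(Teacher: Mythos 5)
Your proposal is correct and matches the paper's own proof, which likewise reduces the corollary to Theorem \ref{t:dis-pmf} by observing that \eqref{e:D2} and \eqref{e:D3} together force $k=1$ (and $A=\tfrac{1}{\mx}$), after which \eqref{e:D1} is equivalent to $\mt<\mx$. You simply spell out the reversible algebra in slightly more detail than the paper does.
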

\begin{proof}
(D2) and (D3) hold if and only if $k=1$. Consequently, (D1) could be satisfied if and only if $\mt<\mx$.
\end{proof}

Closer examination again reveals that the sections form probability mass functions of discrete probability distributions related to Bernoulli counting processes.

\begin{rem}\label{r:dis-bernoulli}
Let us consider the solution \eqref{e:dis:solution}. If we put $A=k=\mx=1$ and $\mt=p$ we get
\[
u(n,m\cdot p) = { n \choose m } \left( 1 - p \right)^{n-m}p^{m},  \quad n \geq m.
\]
which forms, for each fixed $n\in\Natn$, a probability mass function of the binomial distribution. Similarly, for each fixed $m\in\Natn$, $p=\mt$-multiple forms a probability mass function of a version of the negative binomial distribution (the value $p\cdot u(n,m\cdot p)$ describes a probability that for $m$ failures we need $n$ trials). Consequently, the solution of \eqref{e:dis-time} describes a counting Bernoulli stochastic process (see \cite{bBer}).
\end{rem}

\section{Discrete Space and General Time}\label{s:ts-time}
Let us extend the results from the last two sections by considering more general time structures. Let $\TS$ be a time scale such that $\min{\TS}=0$ and $\sup{\TS}=+\infty$. In this paragraph we consider domains
\[
\Omega=\brc{(x,t): x\in\mx\Int, t\in\TS},
\]
and the problem:

\begin{equation}\label{e:ts-time}
\begin{cases}
u^\ddt(x,t)+k \nabla_x u(x,t) = 0, \quad (x,t)\in\Omega,\\
u(x,0)=\begin{cases}
A, \quad x=0,\\
0, \quad x\neq 0,
\end{cases}
\end{cases}
\end{equation}
where $A>0$, $k>0$ and $\nabla_x u(x,t)$ is the backward difference defined in \eqref{e:nabla} and $u^\ddt$ is the delta-derivative in time variable. Since the space is discrete, we could again rewrite the equation in \ref{e:ts-time} into
\begin{equation}\label{e:ts-time-2}
u^\ddt(x,t)=-\frac{k}{\mx}\prn{u(x,t)-u(x-\mx,t)}.
\end{equation}
In order to conserve the sign of solutions we assume that
\begin{equation}\label{e:TS1}
1-\frac{k\mt(t)}{\mx}>0, \tag{TS1}
\end{equation}
i.e. the condition which is similar to the positive regressivity in the time scale theory (e.g. \cite[Section 2.2]{bBP}) or the so-called CFL condition in the discretization of the transport equation (e.g. \cite[Section 4.4]{bLev}).

Let $u$ be a solution of \eqref{e:ts-time}. One could use \cite[Proposition 5.2]{aMB} to show that $u(x,t)=0$ for all $x<0$ is the unique solution there. Since $u(-\mx,t)=0$, we could see that $u^\ddt(0,t)=-\frac{k}{\mx} u(0,t)$. Given the initial condition and assumption \eqref{e:TS1}, we get 
$u(0,t)=Ae_{-\frac{k}{\mx}}(t;0)$, where $e_{-\frac{k}{\mx}}(t;0)$ is a time scale exponential function (see \cite[Section 2]{bBP}).

\begin{lem}\label{l:ts-first-branch}
The solution of \eqref{e:ts-time} satisfies
\begin{enumerate}[\upshape (i)]
	\item $\lim\limits_{t\To\infty} u(0,t)=0$,
	\item $\int\limits_0^\infty u(0,t) \Delta t=A\frac{\mx}{k}$.
\end{enumerate}
\end{lem}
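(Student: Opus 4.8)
The plan is to work directly with the first branch $g(t) := u(0,t)$, which by the discussion preceding the lemma is the unique solution of the scalar dynamic initial value problem
\[
g^\ddt(t) = -\frac{k}{\mx}\,g(t), \qquad g(0) = A,
\]
namely $g(t) = A\,e_{-\frac{k}{\mx}}(t;0)$. Assumption \eqref{e:TS1} says exactly that $p \equiv -\frac{k}{\mx}$ is positively regressive, since $1 + p\,\mt(t) = 1 - \frac{k\mt(t)}{\mx} > 0$, so $g$ is well defined and strictly positive for every $t\in\TS$: at a right-scattered point this is visible from $g(\sigma(t)) = \prn{1 - \frac{k\mt(t)}{\mx}} g(t)$, and the same sign is carried across right-dense parts. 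I would establish this positivity first, because both parts of the statement rest on it.

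The heart of the argument is a single integral identity. Integrating the equation from $0$ to $T\in\TS$ and applying the fundamental theorem of the delta-calculus gives
\[
g(T) - A = \int_0^T g^\ddt(t)\,\Delta t = -\frac{k}{\mx}\int_0^T g(t)\,\Delta t,
\]
hence
\[
\frac{k}{\mx}\int_0^T g(t)\,\Delta t = A - g(T).
\]
Because $g>0$, the left-hand side is nondecreasing in $T$ and bounded above by $A$; therefore $\int_0^\infty g(t)\,\Delta t$ converges to a finite value $I \le \frac{A\mx}{k}$, and consequently $g(T)\To L := A - \frac{k}{\mx}I \ge 0$ as $T\To\infty$. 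This step already produces the limit asserted in (i) and pins down the integral in (ii) at once; it remains only to identify $L$.

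To complete (i) I would rule out $L>0$. If $L>0$, then $g(t)\ge L/2$ for all $t\ge t_0$ for some $t_0\in\TS$, so by comparison for the delta-integral together with $\int_{t_0}^T 1\,\Delta t = T - t_0$ we obtain
\[
\int_{t_0}^T g(t)\,\Delta t \ge \frac{L}{2}\,(T - t_0),
\]
which tends to $+\infty$ as $T\To\infty$ because $\sup\TS = +\infty$. This contradicts the convergence of $\int_0^\infty g\,\Delta t$, so $L = 0$, proving (i). Substituting $L = 0$ into $I = \frac{\mx}{k}\prn{A - L}$ yields $I = \frac{A\mx}{k}$, which is (ii).

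The delicate points here are structural rather than computational: the correct handling of the time-scale fundamental theorem for an improper delta-integral, and the fact that a bounded monotone integral genuinely forces a limit $L$ of $g$, not merely control of its $\liminf$. The positivity of $g$ under \eqref{e:TS1} is what simultaneously supplies the boundedness and fixes the sign of $L$, so I expect the verification of positivity across mixed right-dense and right-scattered structure to be the step most in need of care, rather than the limit argument itself. As a shortcut purely for (i), one can instead bound the exponential explicitly through $\log(1+x)\le x$, giving $g(t)\le A\,e^{-kt/\mx}\To 0$; but the integral identity above is preferable because it delivers (ii) in the same stroke.
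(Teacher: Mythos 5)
Your proof is correct, but it reaches the two claims in a different logical order than the paper does. The paper's proof of (i) simply cites the asymptotics of the time-scale exponential under positive regressivity (in essence the bound $e_{-k/\mx}(t;0)\le e^{-kt/\mx}$ coming from $\log(1+x)\le x$, which you mention only as a shortcut), and then obtains (ii) by antidifferentiation,
\[
\int_0^\infty e_{-\frac{k}{\mx}}(t;0)\,\Delta t \;=\; \lim_{t\To\infty}\, -\frac{\mx}{k}\prn{e_{-\frac{k}{\mx}}(t;0)-1},
\]
using (i) to evaluate the limit. Your integrated identity $g(T)-A=-\frac{k}{\mx}\int_0^T g(t)\,\Delta t$ is the same computation, but you exploit it in the reverse direction: positivity of $g$ makes $\int_0^T g(t)\,\Delta t$ monotone and bounded by $A\frac{\mx}{k}$, hence convergent, which forces $g(T)$ itself to converge, and the limit $L$ must vanish because $\sup\TS=+\infty$ makes $L>0$ incompatible with a finite improper integral (your comparison $\int_{t_0}^T g(t)\,\Delta t\ge \frac{L}{2}(T-t_0)$ is valid on any time scale). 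What this buys is self-containedness: you never invoke the decay of the dynamic exponential as an external fact, only its positivity, which is the standard consequence of the positive regressivity expressed by (TS1) (see \cite[Theorem 2.48]{bBP}); the paper's route is shorter at the price of citing those asymptotics. The one place where you lean on citation rather than argument is the positivity of $g$ across right-dense portions of $\TS$, which is exactly that standard positivity result, so no genuine gap remains.
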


\begin{proof}
\begin{enumerate}[\upshape (i)]
	\item Follows directly from the assumption \eqref{e:TS1} and the properties of the exponential functions \cite[Section 2.2]{bBP},
	\item \[
		\int\limits_0^\infty u(0,t) \Delta t=A \int\limits_0^\infty e_{-\frac{k}{\mx}}(t;0) \Delta t = \lim\limits_{t\To\infty} -A\frac{\mx}{k} \prn{e_{-\frac{k}{\mx}}(t;0)-1} = A\frac{\mx}{k}.
	\]
\end{enumerate}
\end{proof}

Unique solutions of $u(m\mx,t)$ could be found using the variation of constants (see e.g. \cite[Theorem 2.77]{bBP}). However, these computations depend critically on a particular time scale and cannot be performed in general. For example, one could compute that the second branch of the solution has the form
\[
u(\mx,t)=A\frac{k}{\mx} e_{-\frac{k}{\mx}}(t;0) \int_0^t \frac{\Delta \tau}{1-\frac{k\mt(\tau)}{\mx}}.
\]
This implies that we can't derive closed-form solutions as in previous sections. Formally, these solutions can be expressed as Taylor-like series with generalized polynomials whose form depends on particular time scale (see \cite{aMB} and \cite[Section 1.6]{bBP}). We determine these solutions in special cases (see Lemmata \ref{l:con-solution}, \ref{l:dis-solution} and \ref{l:het-discrete}). Therefore, we are forced use another means to show the properties of solutions we are interested in.

\begin{lem}\label{l:ts-sign-1}
Let $x\in \mx \Nat$. If \eqref{e:TS1} is satisfied and $u(x-\mx,t)\geq 0$ and for all $t\in\TS$  and $u(x-\mx,t)> 0$ at least for one $t\in\TS$ then $u(x,t)\geq0$ for all $t\in\TS$.
\end{lem}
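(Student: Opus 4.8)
The plan is to read the claim as a statement about a single linear first-order \emph{dynamic} equation in the time variable, in which the lower-branch value $u(x-\mx,\cdot)$ is treated as a known, nonnegative forcing term. For a fixed $x\in\mx\Nat$, rewriting \eqref{e:ts-time-2} gives
\[
u^\ddt(x,t)=-\frac{k}{\mx}u(x,t)+\frac{k}{\mx}u(x-\mx,t),\qquad u(x,0)=0,
\]
which has the form $y^\Delta=p(t)y+f(t)$ with the constant coefficient $p\equiv-\frac{k}{\mx}$ and the forcing $f(t)=\frac{k}{\mx}u(x-\mx,t)$. First I would invoke the variation-of-constants formula on time scales (\cite[Theorem 2.77]{bBP}); since $u(x,0)=0$, it yields the explicit representation
\[
u(x,t)=\frac{k}{\mx}\int_0^t e_{-\frac{k}{\mx}}(t;\sigma(\tau))\,u(x-\mx,\tau)\,\Delta\tau .
\]

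The heart of the argument is then to show that every factor of the integrand is nonnegative. The constant $\frac{k}{\mx}$ is positive, and $u(x-\mx,\tau)\geq0$ holds for all $\tau\in\TS$ by hypothesis. It remains to verify that the exponential $e_{-\frac{k}{\mx}}(t;\sigma(\tau))$ is positive, and this is exactly where \eqref{e:TS1} is used: the inequality $1-\frac{k\mt(t)}{\mx}>0$ is precisely positive regressivity of $p=-\frac{k}{\mx}$, and under positive regressivity the dynamic exponential is strictly positive (\cite[Section 2.2]{bBP}). Because the delta-integral of a nonnegative rd-continuous integrand is itself nonnegative, I would conclude $u(x,t)\geq0$ for all $t\in\TS$, which is the assertion.

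The stated conclusion $u(x,t)\geq0$ in fact follows from the weaker hypothesis $u(x-\mx,\cdot)\geq0$ alone; the additional assumption that $u(x-\mx,\tau_0)>0$ for some $\tau_0$ is what I would carry along to propagate a \emph{strict}-positivity invariant across the induction on $x$. Concretely, strict positivity of the exponential forces the integrand to be strictly positive near $\tau_0$, so the integral is strictly positive for every $t$ beyond $\sigma(\tau_0)$, giving $u(x,t)>0$ at least at one point and thereby supplying the "positive somewhere" hypothesis for the next branch.

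The main obstacle is not depth but bookkeeping on a general time scale. One must check that (a) the variation-of-constants representation is legitimate, which requires the forcing $u(x-\mx,\cdot)$ to be rd-continuous—this is guaranteed because it is itself a solution of the previous branch, whose existence and uniqueness were already established via \cite[Proposition 5.2]{aMB}—and (b) the identification of \eqref{e:TS1} with positive regressivity, and hence with positivity of $e_{-\frac{k}{\mx}}(t;\sigma(\tau))$, is applied correctly at right-scattered points, where $\sigma(\tau)>\tau$ and the graininess $\mt(\tau)>0$ actually matters.
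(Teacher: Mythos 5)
Your proof is correct, and it takes a genuinely different route from the paper's. The paper argues pointwise in time via the induction principle on time scales \cite[Theorem 1.7]{bBP}: at a right-scattered $t$ it rewrites the equation as $u(x,t+\mt)=\prn{1-\frac{k\mt}{\mx}}u(x,t)+\frac{k\mt}{\mx}u(x-\mx,t)$, which by \eqref{e:TS1} is a weighted average of nonnegative values, while at a right-dense $t$ it uses the differential inequality $u_t(x,t)\geq -\frac{k}{\mx}u(x,t)$ to argue the solution cannot cross zero. You instead solve the branch in closed form by variation of constants and reduce everything to the strict positivity of $e_{-\frac{k}{\mx}}(t,\sigma(\tau))$ under positive regressivity, which is exactly \eqref{e:TS1}. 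Notably, the paper itself invokes the same representation later, in the proof of Theorem \ref{t:ts-integral-2} (where, incidentally, the factor $\frac{k}{\mx}$ that you correctly retain is dropped), so your argument uses no machinery beyond what the paper needs anyway; what it buys is a one-line, fully rigorous nonnegativity conclusion, replacing the paper's somewhat informal right-dense step and induction-principle appeal, at the modest cost of the bookkeeping you flag: rd-continuity of the forcing (clear by induction on branches, each being delta-differentiable hence continuous) and regressivity at right-scattered points. You are also right that the hypothesis ``$u(x-\mx,t)>0$ for at least one $t$'' is not needed for the stated conclusion; your observation that it propagates a strict-positivity invariant is precisely its role in the induction of Theorem \ref{t:ts-sign-2}, where the base case $u(0,t)=Ae_{-\frac{k}{\mx}}(t;0)>0$ supplies it initially.
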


\begin{proof}
First, note that $u(x,0)=0$ for all $x>0$. Consequently, \eqref{e:ts-time-2} implies that $u^\ddt(x,t)>0$ at the beginning of the support of $u(x-\mx,t)$ and $u(x,t)$ is strictly increasing there.
\begin{itemize}
	\item If $t$ is right-scattered then we can rewrite the equation \eqref{e:ts-time-2} into
		\[
			u(x,t+\mt)= \prn{1-\frac{k\mt}{\mx}} u(x,t) + \frac{k\mt}{\mx} u(x-\mx,t).
		\]
		If $u(x,t)\geq 0$, then this is the weighted average of two nonnegative values and thus nonnegative as well.
	\item If $t$ is right-dense then the equation \eqref{e:ts-time-2} has the form
		\[
			u_t(x,t)=-\frac{k}{\mx} u(x,t)+ \frac{k}{\mx} u(x-\mx,t).
		\]
		Since both $u(x-\mx,t)\geq 0$ and $u(x,t)\geq 0$, we have that $u_t(x,t)\geq -\frac{k}{\mx} u(x,t)$ and thus $u(x,t)$ cannot become negative.
\end{itemize}
Following the induction principle (e.g. \cite[Theorem 1.7]{bBP}), we could see that $u(x,t)\geq 0$ for all $t\in\TS$.
\end{proof}

Lemma \ref{l:ts-sign-1} serves as the inductive step in the proof of the sign-conservation.

\begin{thm}\label{t:ts-sign-2}
If \eqref{e:TS1} holds then $u(x,t)\geq0$ for all $(x,t)\in\Omega$.
\end{thm}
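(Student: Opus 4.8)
The plan is to induct on the discrete space variable $x \in \mx\Natn$, anchoring on the branch $u(0,\cdot)$ and using Lemma \ref{l:ts-sign-1} as the inductive step. For $x<0$ nothing is needed, since $u(x,t)=0$ there (as already noted via \cite{aMB}). For the base case $x=0$, recall $u(0,t)=Ae_{-\frac{k}{\mx}}(t;0)$; since $A>0$ and \eqref{e:TS1} is precisely the positive regressivity keeping the time scale exponential positive, we obtain $u(0,t)>0$ for all $t\in\TS$.

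The inductive step would apply Lemma \ref{l:ts-sign-1} to pass from level $x-\mx$ to level $x$. That lemma requires two inputs: $u(x-\mx,t)\geq 0$ for all $t$, which is the obvious inductive hypothesis, and $u(x-\mx,t)>0$ for at least one $t$. The first is immediate; the only delicate point, and the main obstacle, is to guarantee the second at every level, i.e. to carry strict positivity (not merely nonnegativity) through the induction, since a naive hypothesis of the form $u(x-\mx,\cdot)\geq 0$ alone does not meet the premise of Lemma \ref{l:ts-sign-1}.

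I would resolve this by strengthening the inductive hypothesis to: for each $x\in\mx\Natn$, $u(x,t)\geq 0$ for all $t$ \emph{and} $u(x,\cdot)$ is strictly positive for at least one $t$. This holds at $x=0$ by the exponential formula above. For the step, the proof of Lemma \ref{l:ts-sign-1} in fact yields more than nonnegativity: at the left endpoint of the support of $u(x-\mx,\cdot)$, equation \eqref{e:ts-time-2} forces $u^\ddt(x,t)>0$, so $u(x,\cdot)$ is strictly increasing and hence strictly positive just beyond that point. Thus the strengthened hypothesis reproduces itself at level $x$, supplying exactly the strict-positivity input that Lemma \ref{l:ts-sign-1} requires at the next level. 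With the three cases in hand ($x<0$ trivially, $x=0$ by the exponential formula, and $x=(m+1)\mx$ from level $m\mx$ by Lemma \ref{l:ts-sign-1}), the induction principle on the space variable then yields $u(x,t)\geq 0$ for all $(x,t)\in\Omega$, completing the proof.
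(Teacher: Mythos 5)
Your proof is correct and takes essentially the same route as the paper: induction on the space variable with base case $u(0,t)=Ae_{-\frac{k}{\mx}}(t;0)>0$ and inductive step supplied by Lemma \ref{l:ts-sign-1}. In fact your strengthened induction hypothesis (nonnegativity for all $t$ plus strict positivity at some $t$) is more careful than the paper's own one-line proof, which passes from ``$u(x,t)\geq 0$'' to Lemma \ref{l:ts-sign-1} without explicitly verifying that lemma's strict-positivity premise --- a small gap that your bookkeeping, using the strictly increasing behaviour of $u(x+\mx,\cdot)$ at the start of the support of $u(x,\cdot)$, correctly repairs.
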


\begin{proof}
We prove the statement by mathematical induction. Firstly, $u(0,t)=Ae_{-\frac{k}{\mx}}(t;0)>0$. Secondly, if $u(x,t)\geq 0$ then Lemma \ref{l:ts-sign-1} implies that $u(x+\mx,t)\geq 0$ which finishes the proof.
\end{proof}

The following auxiliary lemma shows that the variation of constant formula which generates further branches of solutions conserve zero-limits at infinity.

\begin{lem}\label{l:ts-integral-1}
Let us consider a time scale $\TS$, a constant $K$ such that $1-\mu K>0$ and a function $f:\TS\To[0,\infty)$ such that the integral $\int\limits_0^\infty f(t) \Delta t$ is finite. If we define $g:\TS\To[0,\infty)$ by
\[
	g(t)=\int\limits_0^t e_{-K}(t,\sigma(\tau)) f(\tau)\Delta\tau,
\]
then $\lim\limits_{t\To\infty} g(t)= 0$.
\end{lem}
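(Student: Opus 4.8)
The plan is to lean on two standard properties of the time scale exponential $e_{-K}(\cdot,\cdot)$ that are available under the regressivity hypothesis $1-\mu K>0$ (see \cite[Section 2.2]{bBP}): the cocycle identity $e_{-K}(t,s)=e_{-K}(t,r)\,e_{-K}(r,s)$, and the decay $e_{-K}(t,0)\To 0$ as $t\To\infty$, which is exactly the limit already recorded for $u(0,t)$ in Lemma \ref{l:ts-first-branch}(i). From the defining problem $y^\Delta=-Ky$, $y(s)=1$, positivity (guaranteed by $1-\mu K>0$) together with $y^\Delta=-Ky\leq 0$ forces $y$ to be nonincreasing, so I obtain the workhorse bound $0<e_{-K}(t,s)\leq 1$ for all $t\geq s$. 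Since $f\geq 0$ and $e_{-K}>0$, the function $g$ is nonnegative, so it suffices to show that for every $\varepsilon>0$ one has $g(t)<2\varepsilon$ for all sufficiently large $t$.

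First I would invoke finiteness of $\int_0^\infty f(\tau)\,\Delta\tau$ to pick, given $\varepsilon>0$, a point $T\in\TS$ with $\int_T^\infty f(\tau)\,\Delta\tau<\varepsilon$, and then for $t>T$ split the defining integral as
\[
g(t)=\int_0^T e_{-K}(t,\sigma(\tau))f(\tau)\,\Delta\tau+\int_T^t e_{-K}(t,\sigma(\tau))f(\tau)\,\Delta\tau.
\]
For the tail piece I note that $\sigma(\tau)\leq t$ throughout the domain of integration, so the workhorse bound gives $e_{-K}(t,\sigma(\tau))\leq 1$; hence this term is at most $\int_T^\infty f(\tau)\,\Delta\tau<\varepsilon$, uniformly in $t$. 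For the head piece I would factor $e_{-K}(t,\sigma(\tau))=e_{-K}(t,\sigma(T))\,e_{-K}(\sigma(T),\sigma(\tau))$; since $\sigma(\tau)\leq\sigma(T)$ for $\tau\leq T$, the second factor is $\leq 1$, so this term is bounded by $e_{-K}(t,\sigma(T))\int_0^T f(\tau)\,\Delta\tau$. With $T$ (and thus $\int_0^T f$) now fixed while $e_{-K}(t,\sigma(T))=e_{-K}(t,0)/e_{-K}(\sigma(T),0)\To 0$ as $t\To\infty$, the head piece is below $\varepsilon$ once $t$ is large. Combining the two estimates yields $g(t)<2\varepsilon$ for all large $t$, and since $\varepsilon$ was arbitrary and $g\geq 0$, we conclude $\lim_{t\To\infty}g(t)=0$.

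The step I expect to be the main obstacle is the indeterminate $0\cdot\infty$ form concealed in the integral: pulling out the prefactor as $g(t)=e_{-K}(t,0)\int_0^t f(\tau)/e_{-K}(\sigma(\tau),0)\,\Delta\tau$ exhibits a vanishing factor multiplying a possibly divergent integral, so no naive global bound can work. The splitting at $T$ is precisely what breaks this deadlock, separating a genuinely small tail (controlled by integrability of $f$ and the uniform bound $e_{-K}\leq 1$) from a fixed head that is then annihilated by the exponential decay. The one subtlety worth verifying carefully is that $e_{-K}(t,s)\leq 1$ for $t\geq s$ really does follow from $1-\mu K>0$ via the monotonicity argument above, which implicitly uses $K>0$ (the only relevant case here, since in the application $K=k/\mx>0$).
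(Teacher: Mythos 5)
Your proof is correct and follows essentially the same route as the paper's: the identical splitting of $g(t)$ at a point $T$ with $\int_T^\infty f(\tau)\,\Delta\tau<\epsilon$, the tail controlled by the bound $e_{-K}\leq 1$, and the head annihilated by the decay of the exponential in its first argument. The only variation is in the head estimate --- the paper bounds $f\leq F=\max_{t\in\TS}f(t)$ and lets $\int_0^T e_{-K}(t,\sigma(\tau))\,\Delta\tau\To 0$, whereas you bound the exponential by $e_{-K}(t,\sigma(T))$ via the cocycle identity and keep $\int_0^T f$ fixed --- which is a mild improvement, since it avoids the paper's implicit assumption that $f$ is bounded (the maximum $F$ need not exist for an integrable $f$), and you also correctly flag the implicit hypothesis $K>0$ that the lemma's statement omits but its conclusion requires.
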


\begin{proof}
Since $\int\limits_0^\infty f(t) \Delta(t)$ is finite we know that for each $\epsilon>0$ there exists $T>0$ such that for all $t\in\TS$, $t>T$ the inequality
\begin{equation}\label{e:ts-integral-1}
\int\limits_t^\infty f(\tau) \Delta\tau < \frac{\epsilon}{2},
\end{equation}
holds. Similarly, properties of time scale exponential function imply that for each $\epsilon>0$ and $T>0$ there exists $R>T$ such that for all $t\in\TS$, $t>R$ the following inequality is satisfied
\begin{equation}\label{e:ts-integral-2}
\int\limits_0^T e_{-K}(t;\sigma(\tau)) \Delta\tau < \frac{\epsilon}{2F},
\end{equation}
with $F=\max\limits_{t\in\TS} f(t)$. Consequently, inequalities \eqref{e:ts-integral-1} and \eqref{e:ts-integral-2} imply that for each for each $\epsilon>0$ there exists $T>0$ and $R>T$ such that for all $t>R$
\begin{align*}
g(t) &= \int\limits_0^t e_{-K}(t,\sigma(\tau)) f(\tau)\Delta\tau \\
	&= \int\limits_0^T e_{-K}(t;\sigma(\tau)) f(\tau) \Delta\tau + \int\limits_T^t e_{-K}(t;\sigma(\tau)) f(\tau) \Delta\tau\\
	&\leq F \int\limits_0^T e_{-K}(t;\sigma(\tau)) \Delta\tau + \int\limits_T^t f(\tau) \Delta\tau\\
	&< F \frac{\epsilon}{2F} + \frac{\epsilon}{2}\\
	&=\epsilon,
\end{align*}
which implies that $\lim\limits_{t\To\infty} g(t)= 0$.
\end{proof}

Consequently, we are able to show that the integrals are constant for each fixed $x\geq 0$.

\begin{thm}\label{t:ts-integral-2}
If \eqref{e:TS1} holds and $u(x,t)$ is a solution of \eqref{e:ts-time} then
\[
\int\limits_0^\infty u(x,t) \Delta t = \int\limits_0^\infty u(0,t) \Delta t=A\frac{\mx}{k},
\]
for all $x\in\mx\Natn$.
\end{thm}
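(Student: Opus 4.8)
The plan is to proceed by induction on $x\in\mx\Natn$, establishing simultaneously the two properties $\lim_{t\To\infty} u(x,t)=0$ and $\int_0^\infty u(x,t)\,\Delta t = A\frac{\mx}{k}$. The base case $x=0$ is exactly Lemma~\ref{l:ts-first-branch}, which already supplies both the vanishing limit and the value of the integral. For the inductive step I would assume both properties for $u(x-\mx,\cdot)$ and deduce them for $u(x,\cdot)$.

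First I would integrate the rewritten equation \eqref{e:ts-time-2} over $[0,\infty)$. By the fundamental theorem of calculus on time scales the left-hand side equals $\lim_{t\To\infty} u(x,t) - u(x,0)$, and since $u(x,0)=0$ for $x>0$ this reduces to $\lim_{t\To\infty} u(x,t)$. Thus one obtains
\[
\lim_{t\To\infty} u(x,t) = -\frac{k}{\mx}\prn{\int_0^\infty u(x,t)\,\Delta t - \int_0^\infty u(x-\mx,t)\,\Delta t}.
\]
By the inductive hypothesis the second integral on the right equals $A\frac{\mx}{k}$. Hence, once the limit on the left is shown to vanish, solving this identity immediately yields $\int_0^\infty u(x,t)\,\Delta t = A\frac{\mx}{k}$, which completes the step.

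The heart of the argument, and the expected main obstacle, is therefore establishing $\lim_{t\To\infty} u(x,t)=0$. Here I would apply the variation of constants formula (\cite[Theorem 2.77]{bBP}) to \eqref{e:ts-time-2}, viewed as a scalar linear dynamic equation in $t$ with coefficient $-\frac{k}{\mx}$ and forcing term $\frac{k}{\mx}u(x-\mx,t)$. Since $u(x,0)=0$ for $x>0$, this gives
\[
u(x,t)=\frac{k}{\mx}\int_0^t e_{-\frac{k}{\mx}}(t,\sigma(\tau))\, u(x-\mx,\tau)\,\Delta\tau,
\]
which is precisely the form treated in Lemma~\ref{l:ts-integral-1} with $K=\frac{k}{\mx}$ and $f(\tau)=\frac{k}{\mx}u(x-\mx,\tau)$. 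The hypothesis $1-\mu K>0$ of that lemma is exactly \eqref{e:TS1}; the nonnegativity of $f$ follows from the sign conservation in Theorem~\ref{t:ts-sign-2}; and the finiteness of $\int_0^\infty f(\tau)\,\Delta\tau$ follows from the inductive hypothesis. Lemma~\ref{l:ts-integral-1} then yields $\lim_{t\To\infty} u(x,t)=0$, closing the induction and proving the theorem.
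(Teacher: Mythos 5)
Your proposal is correct and follows essentially the same route as the paper's own proof: induction on $x$ with Lemma~\ref{l:ts-first-branch} as the base case, integration of \eqref{e:ts-time-2} over $[0,\infty)$, and the variation of constants representation combined with Lemma~\ref{l:ts-integral-1} to show $\lim_{t\To\infty}u(x,t)=0$ in the inductive step. If anything, you are slightly more careful than the paper: you include the factor $\frac{k}{\mx}$ in the variation of constants formula (which the paper's displayed formula omits, harmlessly) and you explicitly verify the hypotheses of Lemma~\ref{l:ts-integral-1} via Theorem~\ref{t:ts-sign-2} and the inductive hypothesis.
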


\begin{proof}
We proceed by mathematical induction.
\begin{itemize}
	\item For $x=0$ the convergence of the integral to $A\frac{\mx}{k}$ follows from Lemma \ref{l:ts-first-branch} (ii).
	\item Let us fix $x\in\mx\Nat$ and assume that the statement holds for a function $u(x-\mx,t)$. If we integrate \eqref{e:ts-time-2} we get
	\begin{equation}\label{e:ts-integral-3}
		\int_0^\infty u^\ddt(x,\tau) \Delta\tau=-\frac{k}{\mx}\prn{\int_0^\infty u(x,\tau)\Delta\tau-\int_0^\infty u(x-\mx,\tau)\Delta\tau}.
	\end{equation}
	Let us concentrate on the left-hand side term. The variation of constants formula (\cite[Theorem 2.77]{bBP}) implies that
	\[
		u(x,t) = \int\limits_0^t e_{-\frac{k}{\mx}}(t,\sigma(\tau)) u(x-\mx,\tau)\Delta\tau.
	\]
	Consequently, Lemma \ref{l:ts-integral-1} implies that $ \lim\limits_{t\To\infty} u(x,t)=0$. Using the initial condition $u(x,0)=0$, we could rewrite the left-hand side of \eqref{e:ts-integral-3} into
	\[
		\int_0^\infty u^\ddt(x,\tau) \Delta\tau = \lim\limits_{t\To\infty} u(x,t) -u(x,0) = 0.
	\]
	This implies that \eqref{e:ts-integral-3} could be rewritten into
	\[
		0 =-\frac{k}{\mx}\prn{\int_0^\infty u(x,\tau)\Delta\tau-\int_0^\infty u(x-\mx,\tau)\Delta\tau},
	\]
	or equivalently into
	\[
	\int_0^\infty u(x,\tau)\Delta\tau=\int_0^\infty u(x-\mx,\tau)\Delta\tau,
	\]
	which finishes the proof.	
\end{itemize}
\end{proof}

\begin{figure}[t]
\vspace{42pt}
\begin{center}
\includegraphics[height=1.7in,width=1.7in]{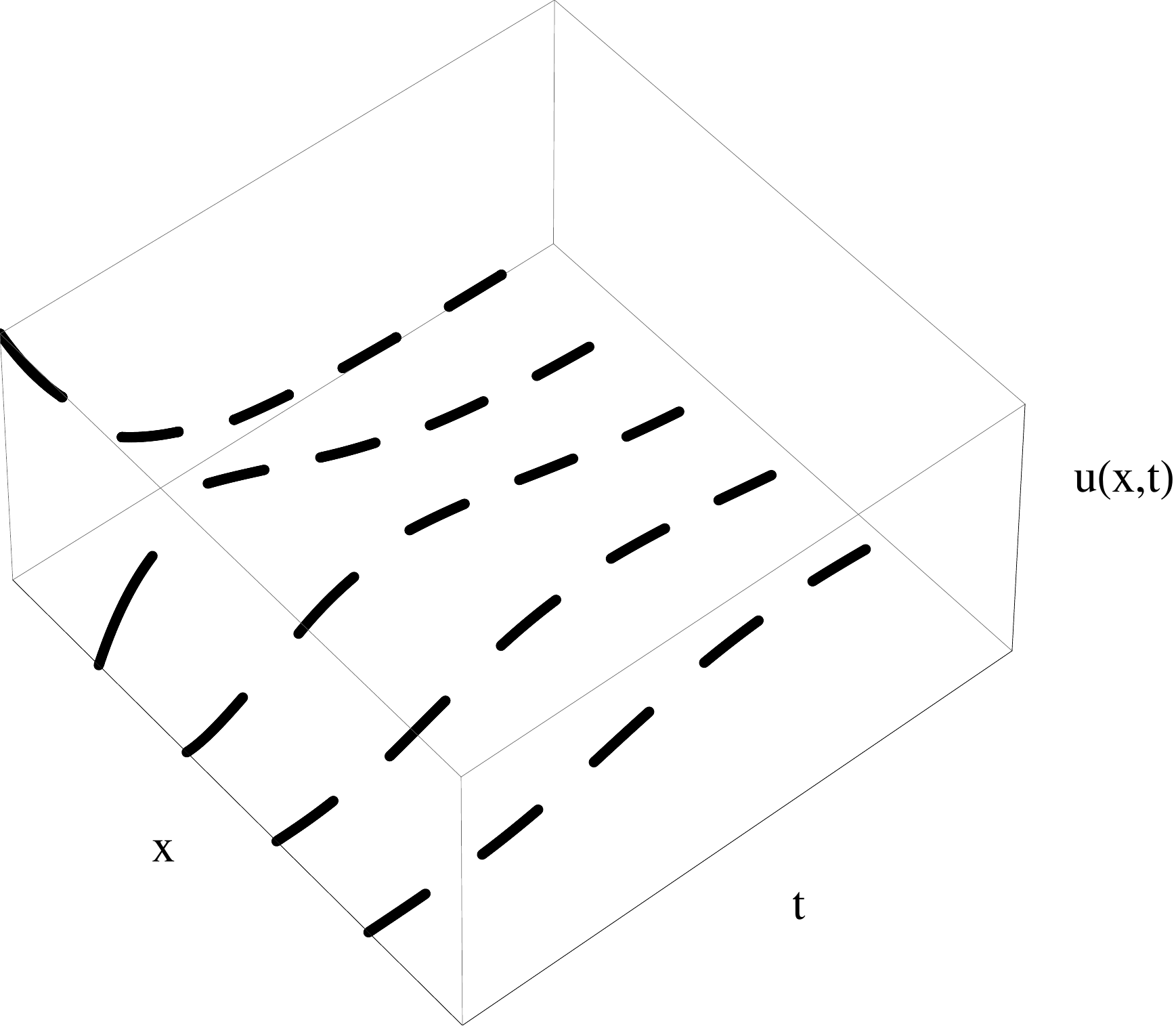} \hspace{0.2in} \ 
\includegraphics[height=1.7in,width=1.7in]{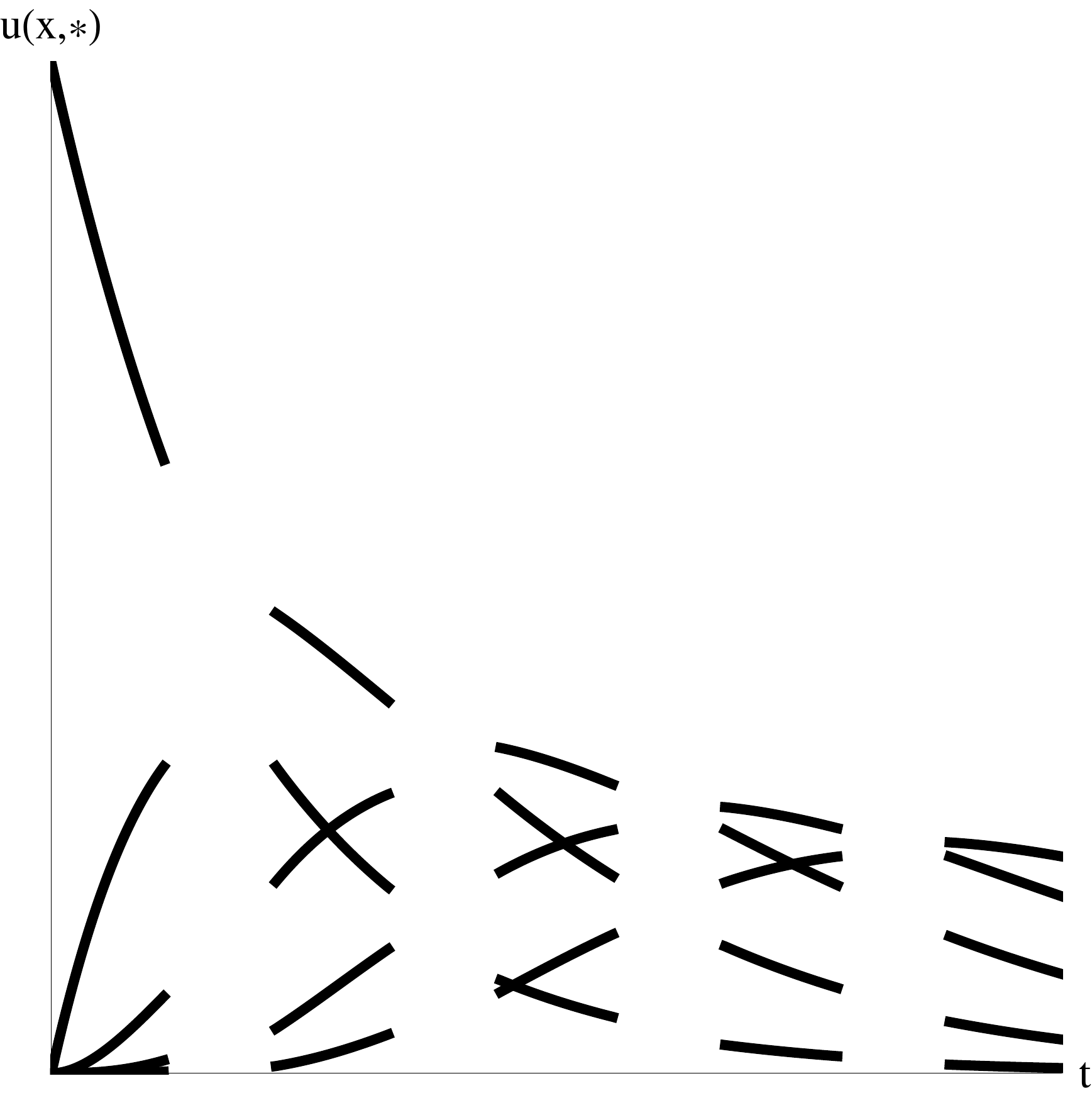} \hspace{0.2in} \
\includegraphics[height=1.7in,width=1.7in]{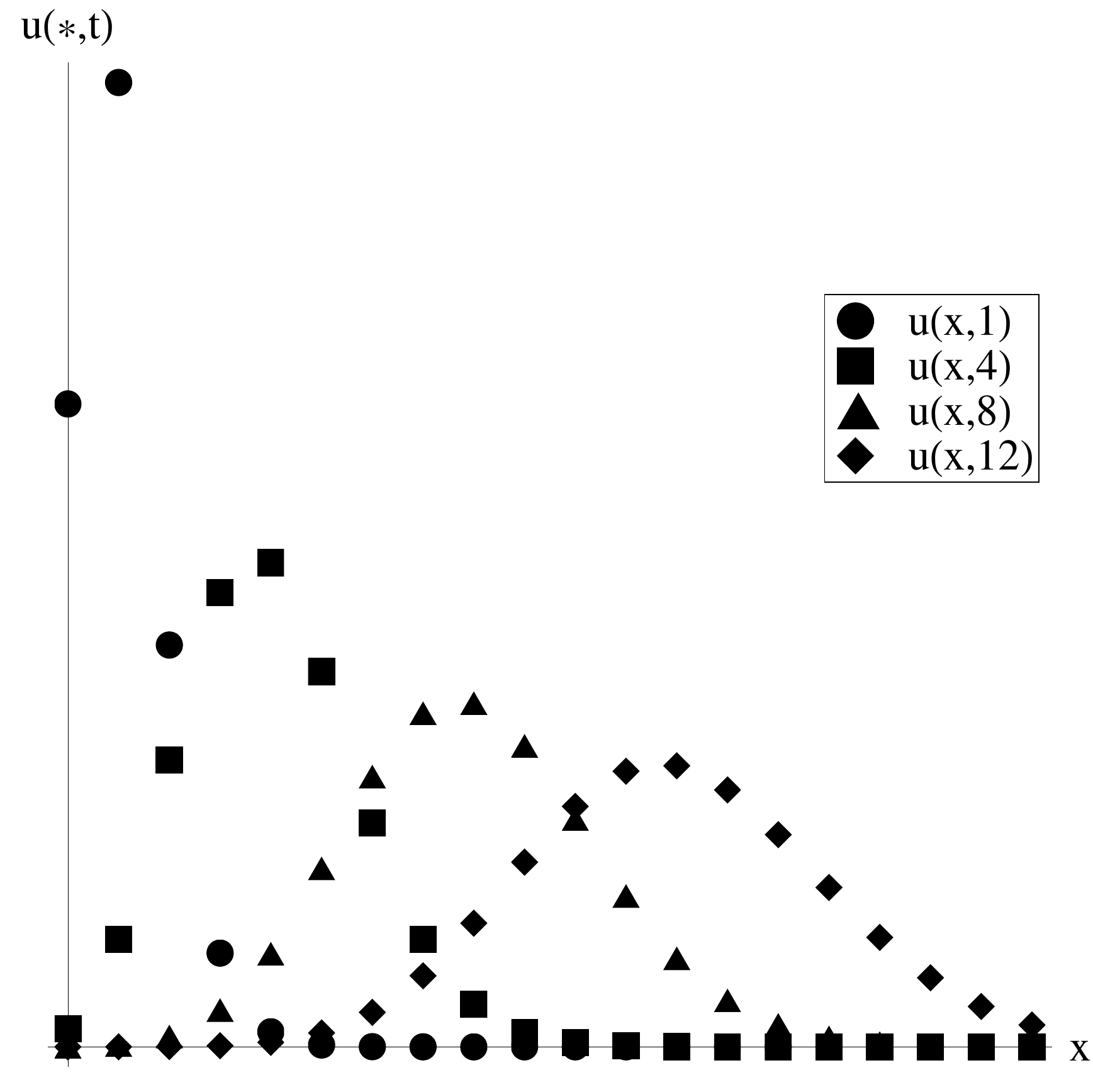}
\end{center}

\caption{Solution of the transport equation with discrete space and general time  \eqref{e:ts-time} with $A=1$, $\mx=1$, $k=1$ and $\TS=\bigcup\limits_{i=0}^{\infty} \bra{i,i+\frac{1}{2}}$.}
\label{f:ts-time}

\end{figure} 


Finally, we show that the integrals (sums in this case) remains constant in time as well.

\begin{thm}\label{t:ts-integral-3}
If \eqref{e:TS1} holds and $u(x,t)$ is a solution of \eqref{e:ts-time} then
\[
\int\limits_0^\infty u(x,t) \Delta x = \mx\sum\limits_{m=0}^{\infty}u(m\mx,t)=A\mx,
\]
for all $t\in\TS$.
\end{thm}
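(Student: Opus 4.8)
The plan is to prove that the weighted spatial sum
\[
S(t):=\mx\sum_{m=0}^{\infty}u(m\mx,t)
\]
is independent of $t$ and equals its initial value $A\mx$. Because passing a delta-derivative through an infinite sum is delicate on a general time scale, I would work throughout with the finite partial sums $S_N(t):=\mx\sum_{m=0}^{N}u(m\mx,t)$ and let $N\To\infty$ only at the very end. At $t=0$ the initial condition gives $S_N(0)=A\mx$ for every $N$, since only the $m=0$ term survives.

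First I would delta-differentiate $S_N$ in time. By linearity over the finite sum and by \eqref{e:ts-time-2},
\[
S_N^{\ddt}(t)=\mx\sum_{m=0}^{N}u^{\ddt}(m\mx,t)=-k\sum_{m=0}^{N}\bigl(u(m\mx,t)-u((m-1)\mx,t)\bigr).
\]
The backward-difference structure makes this telescope, and since $u(-\mx,t)=0$ the only surviving contribution is $-k\,u(N\mx,t)$. Integrating from $0$ to $t$ and invoking the fundamental theorem of time-scale calculus together with $S_N(0)=A\mx$ then yields the key identity
\[
S_N(t)=A\mx-k\int_{0}^{t}u(N\mx,\tau)\,\Delta\tau.
\]

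From here nonnegativity does most of the work. By Theorem \ref{t:ts-sign-2} each $u(m\mx,t)\ge 0$, so $I_N(t):=\int_0^t u(N\mx,\tau)\,\Delta\tau\ge 0$ and hence $S_N(t)\le A\mx$; moreover $S_N(t)$ is nondecreasing in $N$. Thus $S(t)=\lim_{N\To\infty}S_N(t)$ exists, satisfies $S(t)\le A\mx$, and $\sum_{m=0}^{N}u(m\mx,\tau)=S_N(\tau)/\mx\le A$ for all $\tau$ and all $N$.

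The remaining step, which I expect to be the main obstacle, is the reverse inequality, namely $I_N(t)\To 0$ as $N\To\infty$ for each fixed $t$; intuitively the mass able to reach the far site $N\mx$ by time $t$ becomes negligible. I would derive this by summing the key identity over $N$: interchanging the finite sum with the integral (pure linearity) and using the uniform bound just obtained gives
\[
\sum_{N=0}^{M}I_N(t)=\int_{0}^{t}\frac{S_M(\tau)}{\mx}\,\Delta\tau\le\int_{0}^{t}A\,\Delta\tau=At.
\]
Hence the nonnegative series $\sum_N I_N(t)$ has partial sums bounded by $At$, so it converges and its terms satisfy $I_N(t)\To 0$. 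Feeding this back into the key identity gives $S(t)=A\mx-k\cdot 0=A\mx$, which is the assertion.
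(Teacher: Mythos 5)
Your proof is correct, and it takes a genuinely different route from the paper's. The paper works directly with the infinite sum $S(t)=\mx\sum_{m=0}^{\infty}u(m\mx,t)$ and shows $S^{\ddt}(t)=0$ by differentiating term by term; almost all of its effort goes into justifying the interchange of the delta-derivative with the infinite summation, split into three cases according to whether $t_0$ is right-scattered, right-dense with a real interval to its right (where uniform convergence is extracted from the explicit continuous-time solution of Corollary \ref{c:con-general-ic}), or right-dense without such an interval (where an auxiliary comparison solution $v$ is introduced, and the argument is only sketched). You sidestep this interchange entirely: differentiating the finite partial sum $S_N$ is pure linearity, the backward difference telescopes to the single boundary flux $-k\,u(N\mx,t)$ (using $u(-\mx,t)=0$, which the paper establishes via uniqueness of the trivial solution for $x<0$), and the fundamental theorem of time-scale calculus --- applicable since $u(N\mx,\cdot)$ is delta-differentiable, hence continuous, hence rd-continuous --- yields the exact identity $S_N(t)=A\mx-k\,I_N(t)$. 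The genuinely new idea is your summability trick: since Theorem \ref{t:ts-sign-2} gives $u\geq 0$ under \eqref{e:TS1}, the terms $I_N(t)$ are nonnegative, the bound $\sum_{N=0}^{M}I_N(t)=\int_0^t S_M(\tau)/\mx\,\Delta\tau\leq At$ shows the series $\sum_N I_N(t)$ converges, and hence $I_N(t)\To 0$, closing the argument. What each approach buys: the paper's is conceptually the direct conservation computation $S^{\ddt}\equiv 0$, but it is technically heavy, depends on the closed-form solution from Section \ref{s:con-time}, and its third case leaves gaps; your version trades that for an elementary monotone-convergence argument that is uniform over all time scales, needs no classification of points, and uses only sign conservation --- arguably a more rigorous proof than the one in the paper.
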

\begin{proof}
Let $u(x,t)$ be a solution of \eqref{e:ts-time}. We define a function $S:\TS\To\Real$ by
\[
S(t):=\int\limits_0^\infty u(x,t) \Delta x = \mx\sum\limits_{m=0}^{\infty}u(m\mx,t),
\]
and show that $S^\ddt(t)=0$ for all $t\in\TS$.

We can rewrite the equation in \eqref{e:ts-time} into
\[
u^\ddt(x,t)=-\frac{k}{\mx} u(x,t)+\frac{k}{\mx} u(x-\mx,t).
\]
Consequently,
\begin{align}
S^\ddt(t)&= \mx\sum\limits_{m=0}^{\infty}u^\ddt(m\mx,t) \label{e-ts-s1}\\
	&=-k \sum\limits_{m=0}^{\infty}u(m\mx,t) + k \sum\limits_{m=0}^{\infty}u((m-1)\mx,t) \label{e-ts-s2} \\
	&= 0. \label{e-ts-s3}
\end{align}
We have to justify the first equality \eqref{e-ts-s1}, i.e. the interchangability of the delta-derivative and summation at each $t_0\in\TS$. If $t_0$ is right-scattered the non-negativity of the solution implies
\begin{align*}
S^\ddt(t_0)&=\frac{\mx\sum\limits_{m=0}^{\infty}u(m\mx,t_0+\mt(t_0))-\mx\sum\limits_{m=0}^{\infty}u(m\mx,t_0)}{\mt(t_0)}\\
&=\mx\sum\limits_{m=0}^{\infty} \frac{u(m\mx,t_0+\mt(t_0))-u(m\mx,t_0)}{\mt(t_0)}  = \mx\sum\limits_{m=0}^{\infty}u^\ddt(m\mx,t_0).
\end{align*} 
If $t_0$ is right-dense and there is a continuous interval $[t_0,s]$, $s>t_0$, we show that the sum $\sum\limits_{m=0}^{\infty}u^\ddt(m\mx,t)$ converge uniformly on $[t_0,s]$. First, let us note that \eqref{e-ts-s2} yields that this is implied by the uniform convergence of $\sum\limits_{m=0}^{\infty}u(m\mx,t)$. One could use Corollary \ref{c:con-general-ic} to get ($\kappa=k/\mx$):
\begin{align*}
\sum\limits_{m=0}^{\infty}u(m\mx,t) &= \sum\limits_{m=0}^{\infty} \prn{e^{-\kappa (t-t_0)} \sum\limits_{i=0}^{m} C_{i} \frac{(\kappa (t-t_0))^{m-i}}{(m-i)!}} \\
&= e^{-\kappa (t-t_0)}  \sum\limits_{m=0}^{\infty} \frac{(\kappa (t-t_0))^{m}}{m!} \cdot \sum\limits_{i=0}^{\infty}  C_i.
\end{align*}
If $\sum_{i=0}^{\infty}  C_i$ is finite (i.e. $S(t_0)$ is finite), then this sum converge uniformly on an arbitrary closed interval. Finally, if $t_0$ is right-dense and there is no continuous interval $[t_0,s]$, $s>t_0$, we consider a function $v(x,t)$ with $v(m\mx,t_0)=u(m\mx,t_0)$ for all $m$ such that $v$ is a solution on a domain with a continuous interval $[t_0,s]$, $s>t_0$. Obviously, the equation in \eqref{e:ts-time} implies that $v_t(m\mx,t_0)=u^\ddt(m\mx,t_0)$ for all $m$. Moreover for each $\delta>0$ there is $\theta>0$ such that for all $t\in[t_0,t_0+\theta]_\TS$:
\[
(1-\delta) \sum\limits_{m=0}^{\infty}v(m\mx,t) \leq \sum\limits_{m=0}^{\infty}u(m\mx,t) \leq (1+\delta) \sum\limits_{m=0}^{\infty}v(m\mx,t).
\]
Consequently,
\begin{align*}
0 &= \sum\limits_{m=0}^{\infty}u^\ddt(m\mx,t_0) =\sum\limits_{m=0}^{\infty}v_t(m\mx,t_0) \\
&= \prn{\sum\limits_{m=0}^{\infty}v(m\mx,t_0)}_t=\prn{\sum\limits_{m=0}^{\infty}u(m\mx,t_0)}^\ddt.
\end{align*}
Taking into account the fact that $u(x,0)$ is given by the initial condition in \eqref{e:ts-time}, we see that $S(0)=A\mx$. Consequently, \eqref{e-ts-s1}-\eqref{e-ts-s3} imply that $S(t)=A\mx$.
\end{proof}
We could now study the relationship with probability distributions and we begin by generalizing probability density and mass functions.
\begin{def}\label{d:pgf}
We say that a function $f:\TS\To\Real_0^+$ is a \emph{dynamic probability density function} if
\[
	\int_{-\infty}^{\infty} f(t) \Delta t = 1.
\]
\end{def}
Note that if $\TS=\Real$ then $f$ is a probability density function. If $\TS=\mt\Int$ then $\mt f$ is a probability mass functions (see Therorem \ref{t:dis-pmf}).

Combining Theorems \ref{t:ts-integral-2} and \ref{t:ts-integral-3} we get the necessary and sufficient condition for sections to generate probability distributions.
\begin{lem}\label{l:ts-pdf}
Let $u(x,t)$ be a solution of \eqref{e:ts-time}.
\begin{enumerate}
\item $u(\cdot,t)$ is a dynamic probability density function for all $x\in\mx\Natn$ if and only if $\frac{A\mx}{k}=1$ and $\mt(t)<\mx$ for all $t\in\TS$.
\item $u(x,\cdot)$ is a dynamic probability density function for all $t\in\TS$ if and only if $A\mx=1$ and \eqref{e:TS1} holds.
\end{enumerate}
\end{lem}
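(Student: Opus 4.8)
The plan is to unwind Definition \ref{d:pgf}: a section is a dynamic probability density function exactly when it is (a) nonnegative on $\TS$ (this is built into the codomain $\TS\To\Real_0^+$) and (b) has delta-integral equal to $1$. Because the solution of \eqref{e:ts-time} is supported on $x\geq 0$ and $t\geq 0$, the integral over $(-\infty,\infty)$ in Definition \ref{d:pgf} collapses to an integral over $[0,\infty)$ in both parts. So each claim factors into a \emph{normalization} equivalence and a \emph{sign} equivalence, which I would establish separately and then combine into the two stated ``if and only if'' characterizations.

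For the normalization I would simply quote the conservation results already in hand. In the first part, fixing $x\in\mx\Natn$ and integrating in the time variable, Theorem \ref{t:ts-integral-2} gives $\int_0^\infty u(x,t)\,\Delta t=A\frac{\mx}{k}$, a value independent of $x$; hence every such section has unit integral if and only if $\frac{A\mx}{k}=1$, and the fact that this value does not depend on $x$ is precisely what lets a single scalar condition settle the quantifier ``for all $x$''. In the second part, fixing $t\in\TS$ and integrating in space, Theorem \ref{t:ts-integral-3} gives $\int_0^\infty u(x,t)\,\Delta x=A\mx$, so the space sections are normalized precisely when $A\mx=1$.

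For the sign requirement, sufficiency of \eqref{e:TS1} is exactly Theorem \ref{t:ts-sign-2}, and under the respective normalization this restriction on the graininess yields the stated bound on $\mt(t)$. The step I expect to be the main obstacle is the converse: showing that \eqref{e:TS1} is \emph{necessary}, since the earlier theorems assert only its sufficiency. I would argue on the boundary branch $x=0$, where $u(-\mx,t)\equiv0$ reduces \eqref{e:ts-time-2} to $u^\ddt(0,t)=-\frac{k}{\mx}u(0,t)$, so that $u(0,t)=Ae_{-\frac{k}{\mx}}(t;0)>0$ and, at any right-scattered $t$, $u(0,t+\mt(t))=\prn{1-\frac{k\mt(t)}{\mx}}u(0,t)$. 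If \eqref{e:TS1} failed at such a $t$, the factor would be non-positive while $u(0,t)>0$, forcing $u(0,t+\mt(t))\leq0$ and violating nonnegativity, hence the codomain requirement of Definition \ref{d:pgf}. Assembling the two equivalences in each part then gives the result; I would also note the mild ordering here, namely that once nonnegativity supplies \eqref{e:TS1}, Theorems \ref{t:ts-integral-2} and \ref{t:ts-integral-3} legitimately apply to evaluate the integrals in the ``only if'' direction.
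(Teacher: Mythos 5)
Your normalization half is exactly the paper's route: the paper's entire proof is a one-line appeal to Theorems \ref{t:ts-integral-2} and \ref{t:ts-integral-3}, and you correctly add what that one-liner leaves implicit --- that the time integral $A\frac{\mx}{k}$ is independent of $x$ (so one scalar equation settles the universal quantifier), that sign sufficiency is Theorem \ref{t:ts-sign-2}, and that the support of the solution collapses the integral in Definition \ref{d:pgf} to $[0,\infty)$. You also silently repair the paper's swapped section notation ($u(\cdot,t)$ versus $u(x,\cdot)$) in the statement, which is the right reading. The genuine divergence is that you attempt the necessity of the graininess condition, which the paper's proof never addresses at all; unfortunately, that is where your argument has a real gap.

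Failure of the strict inequality \eqref{e:TS1} at a right-scattered $t$ means $1-\frac{k\mt(t)}{\mx}\leq 0$, and your computation $u(0,\sigma(t))=\prn{1-\frac{k\mt(t)}{\mx}}u(0,t)$ produces a \emph{negative} value only when the factor is strictly negative. In the borderline case $\mt(t)=\frac{\mx}{k}$ you get $u(0,\sigma(t))=0$, which is perfectly compatible with the codomain $\Real_0^+$; worse, at such a point the scheme \eqref{e:ts-time-2} degenerates to the pure shift $u(x,\sigma(t))=u(x-\mx,t)$, which preserves nonnegativity, the space sums, and (by the same telescoping as in Theorem \ref{t:ts-integral-2}) the unit time integrals. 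Concretely, take $\TS=\Natn$ with $\mx=k=A=1$, so that $\mt(t)\equiv\mx$ violates both \eqref{e:TS1} and the stated bound $\mt(t)<\mx$: the solution is $u(n,n)=1$ and $u=0$ otherwise, and every time and space section is a (degenerate) dynamic probability density function. So the strict inequality cannot be extracted from Definition \ref{d:pgf}, and no repair of your boundary argument will produce it --- the ``only if'' direction of the lemma is itself false at the boundary, a fact the paper sidesteps by proving only sufficiency; necessity can only give $\mt(t)\leq\frac{\mx}{k}$. Relatedly, your remark that the normalization $\frac{A\mx}{k}=1$ converts \eqref{e:TS1} into the stated bound $\mt(t)<\mx$ does not hold: under $k=A\mx$, \eqref{e:TS1} reads $\mt(t)<\frac{\mx}{k}=\frac{1}{A}$, which agrees with $\mt(t)<\mx$ only when $k=1$ --- a discrepancy already present in part (1) of the paper's statement and resolved only in Theorem \ref{t:ts-pdf}, where $k=1$ is forced.
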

\begin{proof}
The proof is a direct application of Theorems \ref{t:ts-integral-2} and \ref{t:ts-integral-3}.
\end{proof}

Finally, we provide the necessary and sufficient condition for both sections.
\begin{thm}\label{t:ts-pdf}
Let $u(x,t)$ be a solution of \eqref{e:ts-time}. Then both $u(x,\cdot)$ and $u(\cdot,t)$ are dynamic probability density functions for all  $t\in\TS$ and $x\in\mx\Natn$  if and only if $k=1$, $A\mx=1$ and $\mt(t)<\mx$ for each $t\in\TS$.
\end{thm}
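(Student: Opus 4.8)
The plan is to read the conclusion as the simultaneous validity of the two assertions packaged in Lemma~\ref{l:ts-pdf}: the requirement that every time section $u(x,\cdot)$ be a dynamic probability density function and the requirement that every space section $u(\cdot,t)$ be one. Since Lemma~\ref{l:ts-pdf} is already stated as a biconditional (resting on the integral identities of Theorems~\ref{t:ts-integral-2} and \ref{t:ts-integral-3} and the sign result Theorem~\ref{t:ts-sign-2}), I would not re-derive any integral; instead I would simply invoke both parts of the lemma and then intersect the two sets of hypotheses. Because each part is an equivalence, both directions of the desired ``if and only if'' are obtained at once, so the whole argument reduces to solving a small system of conditions.

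First I would collect the two normalization constraints handed over by the lemma, namely $\frac{A\mx}{k}=1$ (from the section that is normalized in time) and $A\mx=1$ (from the section normalized in space). Substituting $A\mx=1$ into $\frac{A\mx}{k}=1$ immediately gives $\frac{1}{k}=1$, hence $k=1$; conversely, $k=1$ together with $A\mx=1$ reproduces both identities. This disposes of the two normalization requirements and pins down $k=1$ and $A\mx=1$.

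Next I would reconcile the accompanying sign conditions. The regressivity-type hypothesis \eqref{e:TS1} reads $1-\frac{k\mt(t)}{\mx}>0$; once $k=1$ has been forced it collapses to $\mt(t)<\mx$ for every $t\in\TS$, which is exactly the sign condition carried by the other half of Lemma~\ref{l:ts-pdf}. Thus under $k=1$ and $A\mx=1$ the two a priori distinct sign conditions coincide in the single requirement $\mt(t)<\mx$, and conversely the three conditions $k=1$, $A\mx=1$, $\mt(t)<\mx$ clearly imply both pairs of hypotheses of the lemma. I expect no genuine obstacle here: the argument is essentially bookkeeping, and the only point deserving a moment's care is verifying that the two separate sign/regressivity conditions genuinely merge after setting $k=1$, so that the final criterion is stated cleanly and the equivalence indeed holds in both directions.
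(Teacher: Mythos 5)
Your proposal is correct and matches the paper's proof, which consists precisely of invoking Lemma~\ref{l:ts-pdf} (the paper states only ``The proof follows from Lemma~\ref{l:ts-pdf}''); your additional bookkeeping---deriving $k=1$ from the two normalizations $\frac{A\mx}{k}=1$ and $A\mx=1$, and observing that \eqref{e:TS1} collapses to $\mt(t)<\mx$ once $k=1$---is exactly the implicit content of that one-line proof, paralleling Corollary~\ref{c:dis-pmf} in the discrete case.
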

\begin{proof}
The proof follows from Lemma \ref{l:ts-pdf}.
\end{proof}

\section{Applications}\label{s:processes}
As suggested in Remarks \ref{r:con-poisson} and \ref{r:dis-bernoulli} the time and space sections of solutions of the transport equation on various domains generate important probability distributions (cf. Table \ref{t:sections}).

\begin{center}
\begin{table}
\begin{tabular}{|c||c|c|c|} \hline
& $u(\cdot,t)$& $u(0,\cdot)$& $u(x,\cdot) , x\geq 0$ \\ \hline\hline
$\Int\times\Real$ & Poisson dist. & exponential dist. & Erlang (Gamma) dist. \\ \hline
$\Int\times p\Int$ & binomial dist. & geometric dist. & negative binomial dist. \\ \hline
\end{tabular}
\\[2mm]
\caption{Correspondence of time and space sections with probability distributions}\label{t:sections}
\end{table}
\end{center}

In other words, the solutions correspond to the so-called counting stochastic processes describing number of occurrences of certain random events (arrival of customers in a queue, device failures, phone calls, scored goals, etc.) (e.g. \cite[Chapters 4 and 5]{bPan}, \cite{bGha}). They have following properties
\begin{enumerate}
\item probability of number of events (occurrences) at time $t$ is given by $u(\cdot,t)$ (Poisson distribution, binomial distribution),
\item probability distribution of the time of the first occurrence is given by  $u(0,t)$ (exponential or geometric distribution),
\item probability distributions that at least $x$ events have happened until time $t$ are given by $u(x-1,\cdot)$ (Erlang or negative binomial distributions),
\item probability distribution of the waiting time until the next occurrence is given by  $u(0,t)$ (exponential or geometric distribution).
\end{enumerate}

Our analysis in Section \ref{s:ts-time}, summarized in Theorem \ref{t:ts-pdf}, suggests that properties (1)-(3) are conserved on general domains $\Int\times\TS$. Properties (2)-(3) are conserved in the sense of Definition \ref{d:pgf} (see Examples \ref{x:het-bernoulli} and \ref{x:stop-start} below). Property (4) does not apply because of the underlying inhomogeneous time structure.



The convergence relationship between the distributions from Table \ref{t:sections} is well-known \cite{bGha}. Our analysis strengthens this relationship since the convergence is based on the solution of the same partial equation with changing underlying structures.

We conclude this section by suggesting two applications which emphasize the time scale choice. First, let us consider Bernoulli trials with non-constant probability of successes. For example, \cite{aDR} shows that the probability that a goal is scored in each minute of the association football match is not constant but increases throughout the game, especially in the last minutes of each half-time. Let us derive an explicit solution on arbitrary heterogeneous discrete structure.
\begin{lem}\label{l:het-discrete}
Let us consider a heterogenous discrete time scale $\TS=\brc{0,\mu_1,\mu_1+\mu_2,\ldots, \sum_{i=1}^{n}\mu_i ,\ldots}$. Then the solution of \eqref{e:ts-time} has the form
\begin{equation}\label{e:het-discrete}
u\prn{m\mx,\sum\limits_{i=1}^{n}\mu_i} = A \sum\limits_{\pi\in P^{n-m}_m} \prod\limits_{i=1}^n K_i^{\pi_i} L_i^{1-\pi_i},
\end{equation}
where $K_i=1-k\frac{\mu_i}{\mu_x}$, $L_i=k\frac{\mu_i}{\mu_x}$ and $P^q_r$ denote a set of all permutation vectors containing $q$ ones and $r$ zeros.
\end{lem}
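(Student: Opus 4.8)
The plan is to prove the formula \eqref{e:het-discrete} by induction on the space variable $m$, exactly mirroring the structure of the continuous-time proof in Lemma \ref{l:con-solution} and the discrete-time proof in Lemma \ref{l:dis-solution}, but now keeping track of the heterogeneous step sizes $\mu_i$. On a heterogeneous discrete time scale every point $t=\sum_{i=1}^n\mu_i$ is right-scattered with graininess $\mu_{n+1}$, so the delta-derivative reduces to the forward difference and the equation \eqref{e:ts-time-2} becomes the explicit recursion
\begin{equation}\label{e:het-recursion}
u\prn{m\mx,\sum\limits_{i=1}^{n+1}\mu_i}=K_{n+1}\, u\prn{m\mx,\sum\limits_{i=1}^{n}\mu_i}+L_{n+1}\, u\prn{(m-1)\mx,\sum\limits_{i=1}^{n}\mu_i},
\end{equation}
where $K_{n+1}=1-k\mu_{n+1}/\mx$ and $L_{n+1}=k\mu_{n+1}/\mx$ are precisely the coefficients appearing in the statement. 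This is the semidiscrete analogue of \eqref{e:dis-time-2}, and it is the engine that drives everything.

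First I would establish the base case $m=0$. Since $u(-\mx,t)=0$, the recursion \eqref{e:het-recursion} collapses to $u(0,\sum_{i=1}^{n+1}\mu_i)=K_{n+1}u(0,\sum_{i=1}^{n}\mu_i)$, and with $u(0,0)=A$ this telescopes to $u(0,\sum_{i=1}^n\mu_i)=A\prod_{i=1}^n K_i$. This matches \eqref{e:het-discrete} because the only permutation vector in $P^n_0$ is the all-ones vector, for which $\prod_{i=1}^n K_i^{\pi_i}L_i^{1-\pi_i}=\prod_{i=1}^n K_i$. One should also record the other boundary case $0\le n<m$, where $u=0$ because $P^{n-m}_m$ is empty (a vector of length $n$ cannot contain $m>n$ zeros), consistent with the fact that the support has not yet reached site $m$.

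The inductive step is where the combinatorial bookkeeping lives, and it is the part I expect to be the main obstacle. Assuming \eqref{e:het-discrete} holds for index $m-1$ at all times, I would substitute the formula for $u((m-1)\mx,\cdot)$ into \eqref{e:het-recursion} and prove \eqref{e:het-discrete} for index $m$ by a second induction on $n$. The key identity to verify is a Pascal-type recurrence for the permutation-indexed sums: a length-$n+1$ vector in $P^{n+1-m}_m$ is obtained either by appending a $1$ to a vector in $P^{n-m}_m$ (contributing the factor $K_{n+1}$, which is exactly the first term of \eqref{e:het-recursion}) or by appending a $0$ to a vector in $P^{n-m}_{m-1}$ (contributing the factor $L_{n+1}$, which matches the second term). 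In other words, I must show
\[
\sum_{\pi\in P^{n+1-m}_m}\prod_{i=1}^{n+1}K_i^{\pi_i}L_i^{1-\pi_i}=K_{n+1}\sum_{\pi\in P^{n-m}_m}\prod_{i=1}^{n}K_i^{\pi_i}L_i^{1-\pi_i}+L_{n+1}\sum_{\pi\in P^{n-m}_{m-1}}\prod_{i=1}^{n}K_i^{\pi_i}L_i^{1-\pi_i},
\]
which is purely a partition of the index set $P^{n+1-m}_m$ according to the value of the last coordinate $\pi_{n+1}\in\{0,1\}$. The only real care needed is to confirm that the products $\prod_{i=1}^{n+1}$ factor correctly as $\prod_{i=1}^n$ times the single last factor, and that the superscripts $n+1-m$ versus $n-m$ track the number of ones consistently once $m$ is held fixed or decremented. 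Once this recurrence is in place, matching it against \eqref{e:het-recursion} closes the induction and completes the proof. As a sanity check I would specialize to $\mu_i\equiv\mt$ constant, where each permutation vector contributes the same product $(1-k\mt/\mx)^{n-m}(k\mt/\mx)^m$ and $|P^{n-m}_m|=\binom{n}{m}$, recovering Lemma \ref{l:dis-solution} exactly.
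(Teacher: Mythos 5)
Your proposal is correct and takes essentially the same route as the paper: the paper likewise reduces \eqref{e:ts-time} on this time scale to the two-term recursion with coefficients $K_{n+1}$ and $L_{n+1}$ and closes the induction via exactly your partition of $P^{n+1-m}_m$ according to the value of the last coordinate (the paper runs a single induction on $n$ treating all $m$ at once, with the boundary cases $m=0$ and $m=n+1$ handled separately, while you nest an induction on $n$ inside one on $m$ --- an immaterial reorganization of the same argument). One bookkeeping slip to fix: in your displayed Pascal-type identity the second sum must run over $P^{\,n-m+1}_{m-1}$ (vectors of length $n$ with $n-m+1$ ones and $m-1$ zeros, as in the paper's computation), not $P^{\,n-m}_{m-1}$, since appending a $0$ must land in $P^{\,n+1-m}_m$; as written your index set consists of vectors of length $n-1$, on which the product $\prod_{i=1}^{n}$ is not even defined.
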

\begin{proof}
We base our proof on the relationship
\[
u\prn{m\mx,\sum\limits_{i=1}^{n}\mu_i} =\prn{1-\frac{k\mu_n}{\mx}}u(x,t)+\frac{k\mu_n}{\mx}u(x-\mx,t)
\]
and proceed by induction. First, the initial condition implies that the statement holds for $n=0$. Next, let us assume that the statement holds for $n\in\Natn$, i.e. \eqref{e:het-discrete} is satisfied. Then we have $u\prn{m\mx,\sum_{i=1}^{n+1} \mu_n}=0$ for $m\notin \brc{0,1,\ldots,n+1}$. Furthermore, for $m=0$
\[
	u\prn{0,\sum\limits_{i=1}^{n+1}\mu_i} = K_{n+1} \prn{A K_1 K_2\ldots K_n}+0 = A K_1 K_2\ldots K_n K_{n+1}.
\]
Next, for $m\in\prn{1,2,\ldots, n}$:
\begin{align*}
u\prn{m\mx,\sum\limits_{i=1}^{n+1}\mu_i} &= K_{n+1} A \sum\limits_{\pi\in P^{n-m}_m} \prod\limits_{i=1}^n K_i^{\pi_i} L_i^{1-\pi_i} + L_{n+1} A \sum\limits_{\pi\in P^{n-m+1}_{m-1}} \prod\limits_{i=1}^n K_i^{\pi_i} L_i^{1-\pi_i}\\
&= A \sum\limits_{\pi\in P^{n+1-m}_m} \prod\limits_{i=1}^{n+1} K_i^{\pi_i} L_i^{1-\pi_i}.
\end{align*}
Finally, for $m=n+1$ we have
\[
	u\prn{(n+1)\mx,\sum\limits_{i=1}^{n+1}\mu_i} = 0+L_{n+1} \prn{A L_1 L_2\ldots L_n} = A L_1 L_2\ldots L_n L_{n+1}.
\]
\end{proof}

We could immediately apply this result to obtain generalizations of standard Bernoulli processes.

\begin{exmp}\label{x:het-bernoulli}
\textbf{Heterogeneous Bernoulli process.}
Let us consider a repeated sequence of trials and assume that the probability of success $p_i$ in $i$-th trial is non-constant, in contrast to standard Bernoulli process discussed in Section \ref{s:dis-time}. If we construct a discrete time scale 
\[
\TS= \brc{0,p_1,p_1+p_2,\ldots,\sum\limits_{i=1}^{n}p_i,\ldots},
\]
then the solution $u(x,t)$ of \eqref{e:ts-time} generates the probability distributions discussed above. Let us choose, for example, $A=\mx=k=1$. Then, $u(\cdot,\sum_{i=1}^{n-1}p_i)$ is the probability mass function describing number of successes in the first $n$ trials. Moreover, $p_i u(x,\cdot)$ is the probability mass function of the number of trials needed to get $x+1$ successes.

To illustrate, let us choose $\TS=\brc{0,\frac{1}{2},\frac{1}{2}+\frac{1}{3},\ldots,\sum\limits_{i=1}^{n}\frac{1}{i+1},\ldots}$ 
to study a process in which the probability of successful trial decreases harmonically. We could use Lemma \ref{l:het-discrete} to determine that:
\[
u\prn{m,\sum\limits_{i=1}^{n}p_i} = \sum\limits_{\pi\in P^{n-m}_m} \prod\limits_{i=1}^n \prn{\frac{1}{i+1}}^{\pi_i} \prn{\frac{i}{i+1}}^{1-\pi_i}, \quad 0\leq m\leq n.
\] 
For example, the probability mass function for the first successful trial appearing in $k$-th trial, i.e. $p_i u(0,\cdot)$, has the form
\[
f(k)=\frac{1}{k(k+1)},\quad k\in\Nat.
\]
\end{exmp}

Finally, we consider a mixed time scale, which, coupled with the transport equation, generates mixed processes and distributions.
\begin{exmp}\label{x:stop-start}
\textbf{Stop-Start Bernoulli-Poisson Process.} Let us assume that a device is regularly used throughout a constant period and then switched off for another one. Let us assume that the probability of failure when the device is in use is determined by a continuous process, whereas the probability of failure in the rest mode is given by a discrete process. This leads to mixed probability distributions which could be generated e.g. by
\[
\TS=\bigcup\limits_{i=0}^{\infty} \bra{i,i+\frac{1}{2}}.
\]
Again $u(x,\cdot)$ describes the mixed probability distribution of $x+1$ failures, in the sense of Definition \ref{d:pgf}. Similarly, $u(\cdot, t)$ is the probability mass function describing the number of failures at time $t$. Note that the probability of failure in the rest mode is given by the length of the discrete gap (cf. Definition \ref{d:pgf}). As in the previous example, we are not able to find the closed-form solutions but one could tediously solve the separate equations to get that:
{\allowdisplaybreaks
\begin{align*}
u(0,t) &=\frac{1}{2^n} e^{\frac{n}{2}-t}, \\
u(1,t) &=\frac{2t+n}{2^{n+1}} e^{\frac{n}{2}-t}, \\
u(2,t) &=\frac{4t^2+4nt+(n^2-4n)}{2!\cdot 2^{n+2}} e^{\frac{n}{2}-t}, \\
u(3,t) &=\frac{8t^3+12nt^2+6(n^2-4n)t+(n^3-12n^2+16n)}{3!\cdot 2^{n+3}} e^{\frac{n}{2}-t}, \\
\ldots \\
u(x,t) &= \frac{\mathrm{polynomial\ of\ order\ } x}{x!\cdot 2^{n+x}} e^{\frac{n}{2}-t}, \\
\end{align*}
}
for $n\in\Nat_0$ ($n$-th continuous part) and $t\in\bra{n,n+\frac{1}{2}}$. See Figure \ref{f:ts-time} for illustration.
\end{exmp}

\section{Conclusion and Future Directions}
There is a number of open questions related to the anaysis presented in this paper. In Section \ref{s:ts-time} we were unable to provide a general closed-form solution of problem \eqref{e:ts-time}. With the connection to probability distributions, is it possible to provide one for further special choices of $\TS$ (see e.g. Examples \ref{x:het-bernoulli} and \ref{x:stop-start})?

In the classical case, the solution is propagated along characteristics. Obviously, our analysis in Sections \ref{s:con-time} and \ref{s:dis-time} implies that this is not the case on semidiscrete domains. However, one could show that at least the maxima are propagated along characteristics on discrete-continuous or discrete-discrete domains (computing directly or using modes of probability distributions). Having no closed-form solutions on time scales, could we prove this property for an arbitrary time scale? This question is closely related to modes of the corresponding probability distributions and the question could be therefore formulated in more general way. Can we, at least in special cases, determine the descriptive statistics related to the generated probability distributions?

From the theoretical point of view, there is also a natural extension to consider a transport equation with continuous space and general time, or general space and time. The applicability of this settings is limited by the fact that such problems does not conserve sign in general (cf. assumption $\mt(t)<\mx$ in Theorem \ref{t:ts-pdf}).

\subsection*{Acknowledgements}
The authors gratefully acknowledge the support by the Czech Science Foundation, grant No. 201121757 and by the Ministry of Education, Youth and Sports of the Czech Republic, Research Plan No. MSM 4977751301.


\ \\

\noindent \textsc{University of West Bohemia, Univerzitni 22, 31200 Pilsen, Czech Republic}\\[5mm]
pstehlik@kma.zcu.cz


\begin{thebibliography}{9}
\bibitem{aAM}
    {\rm Ahlbrandt C., Morian C.,}
    {\rm Partial differential equations on time scales},
    {\em J. Comput. Appl. Math.}
    {{\bf 141}(2002), 35--55.}

\bibitem{bBer} Bertsekas, D. P. and Tsitsiklis, J. N.,
 {\em Introduction to Probability},
Athena Scientific, Massachusetts, 2002.
    
   
\bibitem{aBG}
	{\rm Bohner M. and Guseinov G.,}
    {\rm Partial differentiation on time scales,}
    {\em Dyn. Systems Appl.}
    {{\bf 13}(2004), 351--379.}

\bibitem{bBP} Bohner M. and Peterson A.,
 {\em Dynamic equations on time scales. An
     introduction with applications},
(Boston, MA: Birkh\"auser Boston Inc.), 2001.

\bibitem{bChe}
	Cheng S.~S., {\em Partial Difference Equations}, Taylor \& Francis, London, 2003.

\bibitem{bCZ}
	Curtain R.~W., Zwart H.~J., {\em An introduction to infinite-dimensional linear systems theory}, Springer, London, 1995.

\bibitem{aDR}
	Dixon M.~J. and Robinson M.~E., A Birth Process Model for Association Football Matches, 
	{\em The Statistician}
    {{\bf 47}(3)(1998), 523--538.}

\bibitem{bEla}
Elaydi S., {\em An Introduction to Difference Equations}, Springer, 2005.

\bibitem{bEva}
Evans L.~C., {\em Partial Differential Equations}, Second Edition, American Mathematical Society, 2010.

\bibitem{bGha}
Ghahramani S., {\em Fundamentals of Probability with Stochastic Processes}, Prentice Hall, 2005.

\bibitem{aHil}
       {\rm Hilger S., }
       {\rm Analysis on measure chains - A unified approach to continuous and discrete calculus, }
       {\em Results Math.}, \textbf{18}(1990) 18--56.

\bibitem{bKP}
Kelley W. G. and Peterson A. C., {\em Difference equations (An introduction with applications)}, Harcourt/Academic Press, 2001.

\bibitem{bLev}
LeVeque R. J., {\em Finite Volume Methods for Hyperbolic Problems (Cambridge Texts in Applied Mathematics)}, Cambridge University Press, 2002.

\bibitem{aMB}
Mozyrska D. and Bartosiewicz Z., Observability of a class of linear dynamic infinite
systems on time scales, {\em Proc. Estonian Acad. Sci. Phys. Math.}, {\bfseries 56}(4)(2007), 347-–358.


\bibitem{bPan}
Panjer H.~H., {\em Operational Risk. Modelling Analytics}, John Wiley \& Sons, 2006.

\bibitem{aRei}
Reid W.~T., Properties of Solutions of an Infinite System of Ordinary Linear Differential Equations of
the First Order with Auxiliary Boundary Conditions, {\em Transactions of the American Mathematical Society}, {\bfseries 32} (2)(1930), 284--318.


\bibitem{aRot}
Rothe E., Zweidimensionale parabolische randwertaufgaben als grenzfall eindimensionaler randwertaufgaben, {\em Mathematische Annalen}, {\bfseries 102} (1930), 650--670.

\bibitem{a_maxpripde}
Stehl\'{\i}k P., Maximum principles for elliptic dynamic equations, {\em Math. Comp. Modelling}, {\bfseries 51} (2010), 1193--1201.


\end{thebibliography}
\end{document}